\newcommand\dd{\mathrm{d}}
\newcommand\ud{\,\mathrm{d}}
\newcommand{\Dd}{\mathcal{D}}
\newcommand{\Gg}{\mathcal{G}}
\newcommand{\Hh}{\mathcal{H}}
\newcommand{\Oo}{\mathcal{O}}
\newcommand{\Xx}{\mathcal{X}}
\newcommand{\MM}{\mathbb{M}}
\newcommand{\NN}{\mathbb{N}}
\newcommand{\PP}{\mathbb{P}}
\newcommand{\RR}{\mathbb{R}}
\newcommand\Tr{\mathrm{Tr}}
\newcommand{\inp}[2]{\langle #1,#2 \rangle}
\newcommand{\Ric}{\mathrm{Ric}}
\newcommand{\Nabla}{\nabla}
\newcommand{\ev}{\mathrm{ev}}
\DeclareMathOperator*{\LIM}{LIM}
\renewcommand{\epsilon}{\varepsilon}
\theoremstyle{plain}
\newtheorem{theorem}{Theorem}[section]
\theoremstyle{remark}
\newtheorem{remark}[theorem]{Remark}
\theoremstyle{plain}
\newtheorem{corollary}[theorem]{Corollary}
\newtheorem{lemma}[theorem]{Lemma}
\newtheorem{proposition}[theorem]{Proposition}
\newtheorem{definition}[theorem]{Definition}
\newtheorem{assumption}[theorem]{Assumption}
\numberwithin{equation}{section}
\begin{document}

\title{Large deviations for Brownian motion in evolving Riemannian manifolds}

\author{
\renewcommand{\thefootnote}{\arabic{footnote}}
Rik Versendaal\footnotemark[1]
}

\footnotetext[1]{
Delft Institute of Applied Mathematics, Delft University of 
Technology, P.O. Box 5031, 2600 GA Delft, The Netherlands, E-mail: \texttt{R.Versendaal@tudelft.nl}.
}

\date\today

%\keywords{Large deviations, comparison principle, Hamilton-Jacobi equations, compact Riemannian manifolds, geodesic random walks}
%\subjclass[2000]{}

\maketitle

\begin{abstract}
We prove large deviations for $g(t)$-Brownian motion in a complete, evolving Riemannian manifold $M$ with respect to a collection $\{g(t)\}_{t\in\RR}$ of Riemannian metrics, smoothly depending on $t$. We show how the large deviations are obtained from the large deviations of the (time-dependent) horizontal lift of $g(t)$-Brownian motion to the frame bundle $FM$ over $M$. The latter is proved by embedding the frame bundle into some Euclidean space and applying Freidlin-Wentzell theory for diffusions with time-dependent coefficients, where the coefficients are jointly Lipschitz in space and time.\\

\textit{keywords:} large deviations, Schilder's theorem, $g(t)$-Brownian motion, time-dependent geometry, evolving manifold, frame bundle, horizontal lift, anti-development, Freidlin-Wentzell theory
\end{abstract}

\tableofcontents

\section{Introduction}

In the past decades, the study of evolving Riemannian manifolds has received a lot of attention. The treatment of stochastic processes in this setting was initiated in \cite{ACT08}, where Brownian motion with respect to a collection of time-dependent metrics is defined. The existence of this process is proven, and the gradient of the associated heat-semigroup is studied when the metric evolves under the Ricci-flow. This is further developed in \cite{Cou11}. More generally, in \cite{GPT15}, the theory of martingales with respect to a time-dependent connection is studied.

In \cite{Cou14}, the so-called Onsager-Machlup functional is studied for elliptic diffusions on manifolds with time-dependent metric. It is shown that the probability that a Brownian motion deviates from a smooth curve by at most a distance $\epsilon > 0$ decays exponentially in $\epsilon$. More precisely, if $X_t$ is a Brownian motion with respect to a time-dependent metric $\{g(t)\}_{0\leq t \leq 1}$, and $\gamma:[0,1] \to M$ is a smooth curve, it is proven that for $\epsilon$ small
\begin{multline}
\PP\left(\sup_{0\leq t\leq 1} d_t(X_t,\gamma(t)) \leq \epsilon\right)\\
 \sim e^{-\frac{C}{\epsilon^2}}\exp\left\{\int_0^1 -\frac12|\dot\gamma(t)|_{g(t)}^2 - \frac{1}{12}R_{g(t)}(\gamma(t)) + \frac14\Tr_{g(t)}(\partial_1g(t))\ud t\right\}.
\end{multline}
Here, $d_t$ is the Riemmanian distance associated to the metric $g(t)$, $R_g(t)$ is the scalar curvature of the metric $g(t)$, and $\Tr_{g(t)}(\partial_1g(t))$ denotes the trace of the time-derivative $\partial_1g(t)$ with respect to the metric $g(t)$. This result is an extension of the time-homogeneous case, in which the term containing the derivative $\partial_tg(t)$ is non-existent.\\

A result related to this is Schilder's theorem, which is proven in the time-homogeneous Riemannian setting in \cite{KRV18}. Schilder's theorem is concerned with the large deviations for Brownian paths when the variance tends to 0. More precisely, on the exponential scale we have
$$
\PP\left(X_t^\epsilon \approx \gamma\right) \approx \exp\left\{-\frac1{2\epsilon}\int_0^1|\dot\gamma(t)|^2\ud t\right\},
$$   
where $X_t^\epsilon = X_{\epsilon t}$. Our aim is to extend this result to the context of a manifold with a time-dependent metric. We follow the approach taken in \cite[Section 6]{KRV18}. For this, we define an appropriate way of lifting a Brownian motion with respect to a time-dependent metric to the frame bundle over the manifold, obtaining a so called horizontal Brownian motion. We then embed the frame bundle into Euclidean space, and use Freidlin-Wentzell theory to prove large deviations for embedded horizontal Brownian motion. Finally, we apply the contraction principle to obtain the large deviations for the Brownian motion with respect to a time-dependent metric in the manifold.\\

The paper is organized as follows. In Section \ref{section:main_result} we introduce the large deviation principle and fix the notation from Riemannian geometry. Furthermore, we introduce the notion of a Brownian motion with respect to a time-dependent metric and state the main result, the analogue of Schilder's theorem. Additionally, we sketch the approach to proving this result. Section \ref{section:lift_development} is devoted to the required theory of bundles and horizontal lifting of curves to such bundles. In particular, we define these notions with respect to a time-dependent metric. Finally, in Section \ref{section:proof} we provide all details of the proof of our main result.

%%%%%%%%%%%%%%%%%%%%%%%%%%%%%%%%%%%%%%%%%%%%%%%%%%%%%%%%

\section{Main result} \label{section:main_result}

In this section we define the notion of a large deviation principle. Furthermore, we fix the notation from Riemannian geometry. Additionally, following \cite{ACT08,Cou11}, we define Brownian motion with respect to a collection of metrics $\{g(t)\}_{t\in[0,1]}$. Finally, we state our main result, and give an overview on how we will prove the result in Section \ref{section:proof}.

\subsection{Large deviation principle}

In large deviations one deals with the limiting behaviour of a sequence of random variables $\{X_\epsilon\}_{\epsilon>0}$. More precisely, large deviations quantify this limiting behaviour on an exponential scale by means of a rate function. We have the following definition.

\begin{definition}
Let $\{X_\epsilon\}_{\epsilon>0}$ be a sequence of random variables taking values in a metric space $\Xx$.
\begin{enumerate}
\item A \emph{rate function} is a lower semicontinuous function $I: \Xx \to [0,\infty]$. A rate function is called \emph{good} if its level sets $\{x \in \Xx| I(x) \leq \alpha\}$ are compact for any $\alpha \geq 0$.
\item The sequence $\{X_\epsilon\}_{\epsilon>0}$ satisfies the \emph{large deviation principle (LDP)} in $\Xx$ with rate function $I$ if the following hold:
\begin{enumerate}
\item (Upper bound) For any $F \subset \Xx$ closed
\begin{equation}\label{eq:upper_bound}
\limsup_{\epsilon\to0} \epsilon\log\PP(X_\epsilon \in F) \leq -\inf_{x\in F} I(x).
\end{equation}
\item (Lower bound) For any $G \subset \Xx$ open
$$
\liminf_{\epsilon\to0} \epsilon\log\PP(X_\epsilon \in G) \geq -\inf_{x\in G} I(x).
$$
\end{enumerate}
\end{enumerate}
\end{definition}

When a sequence $\{X_\epsilon\}_{\epsilon>0}$ satisfies the large deviation principle with rate function $I$, it is often informally written as
$$
\PP(X_\epsilon \approx x) \approx e^{-\epsilon^{-1}I(x)}.
$$

\subsection{$g(t)$-Brownian motion and the main result}

Let $M$ be a manifold, which in our case always means it is smooth and second countable. As usual, we denote by $TM$ the tangent bundle and for $x \in M$ we write $T_xM$ for the tangent space at $x$. Smooth sections of $TM$ are referred to as vector fields, and the collection of all vector fields is denoted by $\Gamma(TM)$. 

Let $\Gg = \{g(t)\}_{t\in[0,1]}$ be a collection of Riemannian metrics on $M$, smoothly depending on $t$. We will interchangeably use $\Gg$ and $\{g(t)\}_{t\in[0,1]}$ to refer to this collection of metrics. For $x \in M$ and $v,w \in T_xM$ we write $\inp{v}{w}_{g(t)}$ for the inner product of $v$ and $w$ with respect to the metric $g(t)$. For every $t \in [0,1]$, we denote by $\Nabla^t$ the Levi-Civita connection of $g(t)$, and by $\Delta_M^t$ the associated Laplace-Beltrami operator.

We denote by $C(M)$ the set of continuous functions on $M$, by $C_b(M)$ the set of bounded, continuous functions and by $C^\infty(M)$ the set of smooth functions. Furthermore, the set of continuous curves (on $[0,1]$) in $M$ is denoted by $C([0,1];M)$ and the set of smooth curves by $C^\infty([0,1];M)$. Additionally,  we define the space $H^1([0,1];M)$ by
$$
H^1([0,1];M) = \left\{\gamma:[0,1]\to M \middle | \gamma \mbox{ is differentiable a.e. and } \int_0^1 |\dot\gamma(t)|_{g(t)}^2\ud t < \infty\right\}.
$$
A subscript indicates that we only consider curves with that given initial value, i.e. we write $C_x([0,1];M), C_x^\infty([0,1];M)$ and $H_x^1([0,1];M)$ if we only consider curves with initial value $x \in M$.\\

We now define what we mean by a Brownian motion with respect to a collection of metrics $\{g(t)\}_{t\in[0,1]}$. We follow the definition in \cite{Cou11} which is equivalent to the definition in \cite{ACT08}.

\begin{definition}
Let $M$ be a manifold, and let $\{g(t)\}_{t\in[0,1]}$ be a collection of Riemannian metrics on $M$, smoothly depending on $t$. A process $X_t$ is called a $g(t)$-\emph{Brownian motion} if it is continuous and if for all $f \in C^\infty(M)$,
$$
f(X_t) - f(X_0) - \frac12\int_0^t \Delta_M^sf(X_s)\ud s
$$
is a local martingale. In that case, we say that $X_t$ is \emph{generated} by (the time-dependent generator) $\Delta_M^t$. 
\end{definition}

In general, a $g(t)$-Brownian motion only exists up to some explosion time $e(X)$. In the time-homogeneous setting we have that if the Ricci-curvature is bounded from below, then $e(X)$ is almost surely infinite, see e.g. \cite[Section 4.2]{Hsu02}. This result is extended to the time-inhomogeneous case in \cite{KP11} by requiring that $g(t)$ evolves under the backwards super Ricci flow, i.e., $g(t)$ satisfies
$$
\partial_1g(t) \leq \Ric_{g(t)}.
$$
In that case, $g(t)$-Brownian motion exists up to time $T$ for every $T > 0$.\\

Next, we state the main result, which is the analogue of Schilder's theorem for a $g(t)$-Brownian motion. Before we do this, we first need to introduce a proper rescaling of a $g(t)$-Brownian motion. 

To motivate the rescaling, first consider a standard real-valued Brownian motion $W_t$. Then Schilder's theorem states that
$$
\PP\left((\sqrt\epsilon W_t)_{0\leq t \leq 1} \approx \gamma\right) \approx \exp\left\{-\frac{1}{2\epsilon}\int_0^1 |\dot\gamma(t)|^2\ud t\right\}.
$$
Since $W_t$ is generated by $\frac12\Delta$, the process $\sqrt{\epsilon}W_t$ is generated by $\frac{\epsilon}{2}\Delta$. To extend this to the Riemannian setting, note that $\sqrt\epsilon W_t = W_{\epsilon t}$. As proven in \cite{KRV18}, if $X_t$ is a Riemannian Brownian motion, then
$$
\PP\left((X_{\epsilon t})_{0\leq t \leq 1} \approx \gamma\right) \approx \exp\left\{-\frac{1}{2\epsilon}\int_0^1 |\dot\gamma(t)|_g^2\ud t\right\},
$$
where the process $X_{\epsilon t}$ is generated by $\frac\epsilon2\Delta_M$. 

In the time-inhomogeneous setting, we want the process $X_{\epsilon t}$ to evolve according to a collection of metrics $\{g(t)\}_{t\in[0,1]}$. Consequently, we have to consider $X_t$ as a $g(\epsilon^{-1}t)$-Brownian motion, i.e., $X_t$ is generated by $\frac12\Delta_M^{\epsilon^{-1}t}$. In that case, substitution yields that the process $X_{\epsilon t}$ is generated by $\frac\epsilon 2\Delta_M^{t\epsilon \epsilon^{-1}} = \frac{\epsilon}{2}\Delta_M^t$. Our main result gives the large deviations for the process $X_t^\epsilon = X_{\epsilon t}$. 

Before we give the precise statement, we first give some motivation by considering the one-dimensional, real-valued case. For this, let $g:[0,1] \to \RR_{>0}$ be a smooth function with associated inner products given by $\inp{v}{w}_{g(u)} = g(u)vw$. Let $W_t^g$ be the process generated by $\frac{1}{2g(t)}\frac{\dd^2}{\dd x^2}$, i.e., $W_t^g$ is a $g(t)$-Brownian motion. Then the rescaled process $W_{\epsilon t}^{g(\epsilon^{-1}\cdot)}$ is generated by $\frac{\epsilon}{2g(\epsilon^{-1}\epsilon t)}\frac{\dd^2}{\dd x^2} = \frac{\epsilon}{2g(t)}\frac{\dd^2}{\dd x^2}$. If we denote by $W_t$ a standard Brownian motion, we have 
$$
W_t^g = W_{\int_0^t\frac{1}{g(r)}\ud r}.
$$
Consequently,
$$
W_{\epsilon t}^{g(\epsilon^{-1}\cdot)} = W_{\int_0^{\epsilon t} \frac{1}{g(\epsilon^{-1}r)}\ud r} = W_{\epsilon\int_0^t\frac{1}{g(r)}\ud r} = \sqrt{\epsilon}W_{\int_0^t\frac{1}{g(r)}\ud r}.
$$ 
If we write $\psi(t) = \int_0^t \frac{1}{g(r)}\ud r$, we have
$$
\PP\left(W_{\epsilon \cdot}^{g(\epsilon^{-1}\cdot)} \approx \gamma\right) = \PP\left(\sqrt{\epsilon}W_{\psi(\cdot)} \approx \gamma\right) = \PP\left(\sqrt{\epsilon}W_{\cdot} \approx \gamma(\psi^{-1}(\cdot))\right).
$$
Hence, Schilder's theorem implies that
$$
\PP\left(W_{\epsilon \cdot}^{g(\epsilon^{-1}\cdot)} \approx \gamma\right) \approx \exp\left\{-\frac{1}{2\epsilon}\int_0^{\psi(1)} \left(\frac{\dd}{\dd t}(\gamma(\psi^{-1}(t))\right)^2\ud t\right\}.
$$
Now by the inverse function theorem, $(\psi^{-1})'(t) = \frac{1}{\psi'(\psi^{-1}(t))} = g(\psi^{-1}(t))$. Consequently, we have
\begin{align*}
\int_0^{\psi(1)} \left(\frac{\dd}{\dd t}(\gamma(\psi^{-1}(t))\right)^2\ud t
&=
\int_0^{\psi(1)} \dot\gamma(\psi^{-1}(t))^2 g(\psi^{-1}(t))^2\ud t
\\
&=
\int_0^1 \dot \gamma(u)^2 g(u) \ud u
\\
&=
\int_0^1 |\dot\gamma(u)|_{g(u)}^2\ud u.
\end{align*}
Here we used in the second line the substitution $u = \psi^{-1}(t)$. 

Collecting everything, we have
$$
\PP\left(W_{\epsilon t}^{g(\epsilon^{-1}\cdot)} \approx \gamma\right) \approx \exp\left\{-\frac{1}{2\epsilon}\int_0^1 |\dot\gamma(u)|_{g(u)}^2\ud u\right\}.
$$

Our main theorem states that this happens in general.

\begin{theorem}\label{theorem:Schilder_time}
Let $M$ be a Riemannian manifold and let $\{g(t)\}_{t\in[0,1]}$ be a collection of Riemannian metrics, smoothly depending on $t$. Fix $x_0 \in M$, and let $X_t$ be a $g(t)$-Brownian motion with $X_0 = x_0$. Assume $X_t$ exists for all time $t \in [0,1]$. Furthermore assume that for every $\epsilon > 0$, the continuous process $X_t^\epsilon$ generated by $\frac\epsilon2\Delta_M^t$ exists for all time $t \in [0,1]$. Then $\{X_t^\epsilon\}_{\epsilon > 0}$ satisfies the large deviation principle in $C([0,1];M)$ with good rate function $I_M$ given by
\begin{equation}\label{eq:rate_function_timeBM}
I_M(\gamma) =
\begin{cases}
\frac12\int_0^1 |\dot\gamma(t)|_{g(t)}^2\ud t,		& \gamma \in H_{x_0}^1([0,1];M),\\
\infty									& \mbox{otherwise.}
\end{cases}
\end{equation}
\end{theorem}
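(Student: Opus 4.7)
The plan is to execute the strategy announced in the introduction: lift $X_t^\epsilon$ horizontally to the frame bundle $FM$ to obtain a process governed by an SDE driven by a standard Brownian motion in $\RR^d$, prove Schilder-type large deviations for this horizontal process via Freidlin--Wentzell theory for time-dependent coefficients, and then push the LDP forward to $C([0,1];M)$ using the contraction principle.

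First, using the machinery of Section~\ref{section:lift_development}, I would introduce the horizontal lift $U_t^\epsilon$ of $X_t^\epsilon$ to the frame bundle $FM$ with respect to the family of connections $\nabla^t$. Because the metric varies with $t$, the Stratonovich SDE for $U_t^\epsilon$ will take the schematic form
\begin{equation}
\dd U_t^\epsilon = \sqrt{\epsilon}\sum_{i=1}^d H_i(t, U_t^\epsilon)\circ \dd B_t^i + \epsilon\, V(t, U_t^\epsilon)\,\dd t,
\end{equation}
where the $H_i(t,\cdot)$ are the horizontal vector fields on $FM$ with respect to $\nabla^t$ acting on the standard basis vectors of $\RR^d$ and $V$ is a smooth vector field encoding the time-derivative $\partial_t g(t)$. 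The $\sqrt{\epsilon}$ and $\epsilon$ prefactors are produced by the time rescaling $X_t^\epsilon = X_{\epsilon t}$ applied to an unscaled $g(\epsilon^{-1}t)$-Brownian motion. The lift starts from a fixed frame $u_0$ over $x_0$, satisfies $\pi(U_t^\epsilon)=X_t^\epsilon$, and its anti-development is $\sqrt{\epsilon}$ times a standard Brownian motion on $\RR^d$.

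Next I would embed $FM$ as a closed submanifold of some $\RR^N$ and extend $H_i(t,\cdot)$ and $V(t,\cdot)$ to smooth, compactly supported, and hence jointly space--time Lipschitz vector fields on $[0,1]\times\RR^N$; the non-explosion hypothesis justifies choosing the cutoff large enough that the true and extended solutions agree with exponentially high probability. The resulting SDE is then in the scope of the Freidlin--Wentzell theorem for diffusions with jointly Lipschitz time-dependent coefficients, which yields an LDP for $\{U_t^\epsilon\}$ in $C([0,1];\RR^N)$ at speed $\epsilon$ with good rate function
\begin{equation}
I_{FM}(u) = \inf\left\{\tfrac{1}{2}\int_0^1 |\dot b(t)|^2\,\dd t \;\middle|\; \dot u(t) = \sum_{i=1}^d H_i(t,u(t))\,\dot b^i(t),\; u(0)=u_0\right\},
\end{equation}
over $b \in H^1_0([0,1];\RR^d)$; the drift $V$ disappears from the skeleton equation because of its extra factor of $\epsilon$. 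Since the skeleton flow stays in $FM$, this LDP can be restricted to $C([0,1];FM)$, and applying the contraction principle through $\pi\colon FM\to M$ produces the LDP for $X_t^\epsilon$ in $C([0,1];M)$ with rate function $I_M(\gamma) = \inf\{I_{FM}(u) : \pi\circ u = \gamma,\, u(0)=u_0\}$. To identify $I_M$ with \eqref{eq:rate_function_timeBM}, I would use that the deterministic horizontal development sets up a bijection between $H^1_{x_0}$ curves $\gamma$ on $M$ and $H^1_0$ paths $b$ on $\RR^d$ via $\dot\gamma(t)=u(t)\dot b(t)$; because each $u(t)$ is constructed to be an isometry $(\RR^d,\langle\cdot,\cdot\rangle)\to(T_{\gamma(t)}M,\langle\cdot,\cdot\rangle_{g(t)})$, one has $|\dot b(t)|^2=|\dot\gamma(t)|_{g(t)}^2$ pointwise, yielding exactly \eqref{eq:rate_function_timeBM}.

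The main obstacle I expect is the combined handling of the time-dependent horizontal lift and the Freidlin--Wentzell step: the correction drift $V(t,u)$ from the moving metric has to be shown to be smooth in both arguments (so that a jointly Lipschitz compactly supported extension exists in the embedding) and to be harmless in the limit (so the rate function reduces to the pure energy of the anti-development). Making the Euclidean embedding of $FM$ cooperate simultaneously with the horizontal vector fields and the time-dependent drift, while preserving the joint Lipschitz hypothesis that Freidlin--Wentzell requires, is where the bulk of the work will lie; this is presumably the content of Section~\ref{section:lift_development}.
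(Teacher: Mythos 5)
Your overall architecture matches the paper's: lift to $FM$, embed in $\RR^N$, apply time-inhomogeneous Freidlin--Wentzell, contract via $\pi$, and identify the rate function through the anti-development. However, there is a genuine error in the key step. The vertical correction drift in the SDE for $U_t^\epsilon$ is \emph{not} of order $\epsilon$: the rescaled lift satisfies
$\dd U_t^\epsilon = H_i(t,U_t^\epsilon)\circ \dd W_t^{\epsilon,i} - \tfrac12(\partial_t g(t))(U_t^\epsilon e_i,U_t^\epsilon e_j)V^{ij}(U_t^\epsilon)\ud t$,
with only the noise carrying the factor $\sqrt{\epsilon}$ and the drift of order one. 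This is forced by geometry: the drift is exactly the vertical motion needed for the frame to track the changing metric (Proposition~\ref{prop:lift_partial_t} and Corollary~\ref{cor:expression_lift_collection}), and the metric changes at rate $\partial_t g(t)$ per unit time regardless of $\epsilon$; equivalently, without an $O(1)$ vertical drift the generator $\tfrac{\epsilon}{2}\sum_i H_i(t,\cdot)^2$ would not project to $\tfrac{\epsilon}{2}\Delta_M^t$ because the frame would cease to be $g(t)$-orthonormal. Consequently the drift does \emph{not} disappear from the skeleton equation; it must be retained, so that the controlled ODE is precisely the development equation \eqref{eq:time_development} and the limiting lift $u(t)$ stays in $\Oo_{(t,\gamma(t))}$. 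Your own identification $|\dot b(t)|=|\dot\gamma(t)|_{g(t)}$ relies on $u(t)$ being a $g(t)$-isometry, which is exactly the property the discarded drift provides --- so as written the argument is internally inconsistent, and dropping the drift would produce the wrong rate function.

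A second, smaller gap: you justify the localization by saying non-explosion lets you choose the cutoff so that the true and truncated solutions agree with exponentially high probability. Non-explosion alone gives no exponential estimate on exit probabilities as $\epsilon\to0$. The paper needs a separate argument (Propositions~\ref{prop:upper_metric}--\ref{prop:compact_concentration}): a dominating metric $\overline g$, a smooth containment function $\Upsilon=\log(1+\tilde r^2)$, and a Lyapunov/exponential-martingale bound giving, for each $\alpha$, a compact $K_\alpha$ with $\limsup_{\epsilon\to0}\epsilon\log\PP(X^\epsilon \text{ exits } K_\alpha)\le-\alpha$; this also uses completeness of $M$ and the compactness of the set of $g(t)$-orthonormal frames over $K_\alpha$ (Lemma~\ref{lemma:compact_frames}). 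The subsequent patching of the truncated LDPs into upper and lower bounds for the full process also needs to be carried out explicitly rather than asserted.
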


%\begin{remark}
%If a $g(t)$-Brownian motion exists for all $t \in [0,1]$, then a $g(\epsilon^{-1}t)$-Brownian motion exists for all $t \in [0,\epsilon]$. Since a continuous process $X_t^\epsilon$ generated by $\frac{\epsilon}{2}\Delta_M^t$ can be considered as a $g(\epsilon^{-1}t)$-Brownian motion evaluated at time $\epsilon t$, we conclude that $X_t^\epsilon$ exists for all $t \in [0,1]$. This explains why in Theorem \ref{theorem:Schilder_time} it suffices to assume that a $g(t)$-Brownian motion exists for all $t \in [0,1]$.
%\end{remark}

\subsection{Sketch of the proof Theorem \ref{theorem:Schilder_time}}\label{subsection:sketch_proof}

The proof of Theorem \ref{theorem:Schilder_time} follows the same lines as the proof given in \cite[Section 6]{KRV18} for the time-homogeneous case. The main work lies in defining a good analogue of the concept of horizontal lift and anti-development in the time-inhomogeneous case. The detailed construction is given in Section \ref{section:lift_development}.  

Instead of proving the large deviation principle for $X_t^\epsilon$ directly, we first prove the large deviation principle for its horizontal lift $U_t^\epsilon$ with respect to $\{g(t)\}_{t\in[0,1]}$ in the frame bundle $FM$. As explained in Section \ref{subsection:lift_BM} (see also \cite{Cou11,ACT08}), this process satisfies the Stratonovich stochastic differential equation
\begin{equation}\label{eq:horizontal_sketch_proof}
\dd U_t^\epsilon = H_i(t,U_t^\epsilon)\circ \dd W_t^{\epsilon,i} - \frac12(\partial_tg(t))_{ij}(U_t^\epsilon e_i,U_t^\epsilon e_j)V^{ij}(U_t^\epsilon)\ud t.
\end{equation}
Here, $H_i(t,\cdot)$ are the fundamental horizontal fields with respect to the metric $g(t)$ and $V^{ij}$ is the canonical basis of vertical vector fields over $FM$, see Section \ref{subsection:horizontal_lift}. Furthermore, $\{e_1,\ldots,e_d\}$ denotes the standard basis of $\RR^d$.

By embedding $FM$ smoothly in some Euclidean space $\RR^N$, we can push-forward the equation \eqref{eq:horizontal_sketch_proof} to $\RR^N$ to obtain a stochastic differential equation on $\RR^N$ with a drift, and a diffusion of order $\sqrt{\epsilon}$, see e.g. \cite[Section 1.2]{Hsu02}. Consequently, at least if we restrict to compact sets, we can apply Theorem \ref{theorem:time_inhomogeneous_FW} in Section \ref{section:FW_theory} to get the large deviations for the embedded process. By the contraction principle (see \cite[Theorem 4.2.1]{DZ98}, this can then be transferred to the process $X_t^\epsilon$. The relation between the derivative of a curve in $M$ and the derivative of its anti-development with respect to $\{g(t)\}_{t\in[0,1]}$ in $\RR^d$ then assures that we obtain the correct rate function.

Finally, as shown in Section \ref{subsection:compact_containment}, we can use a general approach using Lyapunov functions to show that the process $X_t^\epsilon$ remains in a compact set with high probability. This, together with the result obtained when restricting to compact sets, allows us to obtain the full result of Theorem \ref{theorem:Schilder_time}.

%%%%%%%%%%%%%%%%%%%%%%%%%%%%%%%%%%%%%%%%%%%%%%%%%%%%%%%%%

\section{Horizontal lift and anti-development}\label{section:lift_development}

In this section we discuss how to define a horizontal lift with respect to a collection $\{g(t)\}_{t\in[0,1]}$ of metrics on a manifold $M$. In order to do this, we need a suitable definition of what we mean by horizontal curves and horizontal vectors. For this, we need to incorporate time into our analysis. In order for the upcoming constructions to make sense also for $t \notin [0,1]$, we set $g(t) = g(0)$ for $t < 0$ and $g(t) = g(1)$ for $t > 1$. 

\subsection{A time-dependent connection which is metric}

Denote spacetime by $\MM := \RR \times M$ and let $T\MM$ be its tangent bundle. For $(t,x) \in \MM$ we have $T_{(t,x)}\MM = \RR \oplus T_xM$. We denote the basis tangent vector in the time-direction by $\partial_1$.

Instead of considering the tangent bundle $T\MM$, we also want to view $TM$ as bundle over $\MM$. More precisely, we define the bundle $\overline{TM}$ over $\MM$ with fibres given by
$$
\overline{TM}_{(t,x)} = T_xM
$$
for all $t \in \RR$ and all $x \in M$. A smooth section of $\overline{TM}$ is called a time-dependent vector field. We will sometimes write $Z(t) \in \Gamma(\overline{TM})$ to stress that $Z$ is a time-dependent vector fields on $M$.

To define the desired connection on $\overline{TM}$, we first need to define what we mean by the derivative of $g(t)$ with respect to $t$. This is a 2-tensor $\partial_1g(t):TM \times TM \to C^\infty(M)$, which in coordinates is given by
$$
\partial_1g(t)(v,w) = \partial_1g_{ij}(t)v^iw^j,
$$
where $v = v^i\partial_i$, $w = w^j\partial_j$ and $g(t) = g_{ij}(t)\dd x^i\otimes \dd x^j$. 

Furthermore, for $Y \in \Gamma(TM)$, we denote by $(\partial_1g(t))(Y(t),\cdot)^{\#_t}$ the vector field obtained by 'raising an index' with respect to the metric $g(t)$. More precisely, it is the unique vector field such that for all vector fields $Z \in \Gamma(TM)$ we have
$$
(\partial_1g(t))(Y(t),Z) = \inp{(\partial_1g(t))(Y(t),\cdot)^{\#_t}}{Z}_{g(t)}.
$$

Finally, we denote by $\Nabla^t$ the Levi-Civita connection of the metric $g(t)$.\\ 

Following the idea in \cite{Ham86,Ham93}, see also Chapter 6 in \cite{AH11}, we equip the bundle $\overline{TM}$ over $\MM$ with a natural connection $\Nabla: \Gamma(T\MM) \times \Gamma(\overline{TM}) \to \Gamma(\overline{TM})$ given by
\begin{equation}\label{eq:connection_spacetime}
\begin{cases}
\Nabla_XY(t) = \Nabla_X^tY,\\
\Nabla_{\partial_1}Y(t) = \partial_1Y(t) + \frac12(\partial_1g(t))(Y(t),\cdot)^{\#_t},
\end{cases}
\end{equation}
for $X \in \Gamma(TM)$ a vector field over $M$ and $Y \in \Gamma(\overline{TM})$ a time-dependent vector field over $M$. By $C^\infty$-linearity, this defines $\Nabla_ZY$ for all $Z \in \Gamma(T\MM)$ and all $Y \in \Gamma(\overline{TM})$. This connection is compatible with the collection $\{g(t)\}_{t\in[0,1]}$ of Riemannian metrics on $M$, as we will show in the following proposition.

\begin{proposition}\label{prop:metric_connection}
The connection defined in \eqref{eq:connection_spacetime} is metric in the following sense: for all time-dependent vector fields $X,Y \in \Gamma(\overline{TM})$ and $Z \in \Gamma(T\MM)$ we have 
$$
Z\inp{X(t)}{Y(t)}_{g(t)}
= 
\inp{\Nabla_ZX(t)}{Y(t)}_{g(t)} + \inp{X(t)}{\Nabla_ZY(t)}_{g(t)}
$$
for all $t \in \RR$. 
\end{proposition}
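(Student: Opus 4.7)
The plan is to verify the metric property by splitting $Z$ into its spatial and temporal components. Since both sides of the identity are $C^\infty(\MM)$-linear in $Z$, and $T\MM = \RR\partial_1 \oplus TM$, it suffices to verify the identity in two cases: when $Z \in \Gamma(TM)$ is a purely spatial vector field (extended trivially to $\MM$), and when $Z = \partial_1$.

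The spatial case is essentially immediate. For $Z = X' \in \Gamma(TM)$, the action $X'\langle X(t), Y(t)\rangle_{g(t)}$ is, for each fixed $t$, the action of $X'$ on a smooth function on $M$, and by definition $\nabla_{X'}X(t) = \nabla^t_{X'}X$ and likewise for $Y$. Thus the identity collapses to the usual metric compatibility of the Levi-Civita connection $\nabla^t$ of $g(t)$, which holds by construction of $\nabla^t$.

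The temporal case is where the correction term in \eqref{eq:connection_spacetime} earns its keep. First I would compute
\begin{equation}
\partial_1 \langle X(t), Y(t)\rangle_{g(t)} = (\partial_1 g(t))(X(t), Y(t)) + \langle \partial_1 X(t), Y(t)\rangle_{g(t)} + \langle X(t), \partial_1 Y(t)\rangle_{g(t)},
\end{equation}
which is just the product rule applied to the coordinate expression $g_{ij}(t) X^i(t) Y^j(t)$. On the other hand, expanding the right-hand side of the claim using the definition of $\nabla_{\partial_1}$ and pulling the raised-index tensors back to pairings via the defining identity for $\#_t$ gives
\begin{equation}
\langle \partial_1 X(t), Y(t)\rangle_{g(t)} + \langle X(t), \partial_1 Y(t)\rangle_{g(t)} + \tfrac{1}{2}(\partial_1 g(t))(X(t), Y(t)) + \tfrac{1}{2}(\partial_1 g(t))(Y(t), X(t)).
\end{equation}
Since $g(t)$ is symmetric for every $t$, so is its time-derivative $\partial_1 g(t)$, and the two half-terms add up to $(\partial_1 g(t))(X(t), Y(t))$, matching the product rule above.

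There is no real obstacle; the only bookkeeping point is ensuring that $(\partial_1 g(t))(X(t),\cdot)^{\#_t}$ is interpreted consistently with the defining pairing so that the inner product collapses back to the 2-tensor evaluation, and that the factor $\tfrac{1}{2}$ in \eqref{eq:connection_spacetime} is distributed across both arguments via the symmetry of $\partial_1 g(t)$. Combining the two cases and invoking $C^\infty(\MM)$-linearity in $Z$ completes the proof.
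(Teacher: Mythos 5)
Your proposal is correct and follows essentially the same route as the paper: decompose $Z$ into a spatial part handled by metric compatibility of $\nabla^t$ and a $\partial_1$ part handled by the coordinate product rule, with the correction term $\tfrac12(\partial_1 g(t))(\cdot,\cdot)^{\#_t}$ accounting for $(\partial_1 g(t))(X(t),Y(t))$ via the symmetry of $\partial_1 g(t)$. The only cosmetic difference is that you make the symmetry argument explicit where the paper merely says ``splitting $(\partial_1g(t))(X(t),Y(t))$ in two and raising one index.''
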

\begin{proof}
Note that $Z \in \Gamma(T\MM)$ can be written as $Z(t,x) = c_1(t,x)\partial_t + \tilde Z(t)(x)$ where $c_1:\MM \to \RR$ is a smooth function and $\tilde Z \in \Gamma(\overline{TM})$ a time-dependent vector field over $M$. Since $\Nabla^t$ is metric with respect to $g(t)$, we have
\begin{align*}
\tilde Z(t)\inp{X(t)}{Y(t)}_{g(t)} 
&=
\inp{\Nabla^t_{\tilde Z(t)}X(t)}{Y(t)}_{g(t)} + \inp{X(t)}{\Nabla^t_{\tilde Z(t)}Y(t)}_{g(t)}
\\
&=
\inp{\Nabla_{\tilde Z(t)}X(t)}{Y(t)}_{g(t)} + \inp{X(t)}{\Nabla_{\tilde Z(t)}Y(t)}_{g(t)}.
\end{align*}

 For the derivative with respect to $\partial_1$, if we write $X(t) = X^i(t)\partial_i$, $Y(t) = Y^j(t)\partial_j$ and $g(t) = g_{ij}(t)\dd x^i\otimes \dd x^j$ in coordinates, we get
\begin{align*}
\partial_1\inp{X(t)}{Y(t)}_{g(t)}
&=
\partial_1(X^i(t)Y^j(t)g_{ij}(t))
\\
&=
\partial_1X^i(t)Y^j(t)g_{ij}(t) + X^i(t)\partial_1 Y^j(t)g_{ij}(t) + X^i(t)Y^j(t)\partial_1 g_{ij}(t)
\\
&=
\inp{\partial_1X(t)}{Y(t)}_{g(t)} + \inp{X(t)}{\partial_1Y(t)}_{g(t)} + (\partial_1g(t))(X(t),Y(t))
\\
&=
\inp{\Nabla_{\partial_1}X(t)}{Y(t)}_{g(t)} + \inp{X(t)}{\Nabla_{\partial_1}Y(t)}_{g(t)}.
\end{align*}
Here, the last line follows by splitting $(\partial_1g(t))(X(t),Y(t))$ in two, and raising one index. 

Finally, using that $\Nabla$ is $C^\infty$-linear in the first variable proves the claim.
\end{proof}

As a corollary, we obtain the derivative of the inner product between two time-dependent vector fields along a curve in $M$.

\begin{corollary}\label{cor:metric_connection}
Let $X(t),Y(t) \in \Gamma(\overline{TM})$ be time-dependent vector fields and let $\gamma:[0,1] \to M$ be a curve. Then
\begin{align*}
&\frac{\dd}{\dd t}\inp{X(t,\gamma(t))}{Y(t,\gamma(t))}_{g(t)}
\\
&=
\inp{\Nabla_{\partial_1 + \dot\gamma(t)}X(t)}{Y(t)}_{g(t)} + \inp{X(t)}{\Nabla_{\partial_1 + \dot\gamma(t)}Y(t)}_{g(t)}.
\end{align*}
\end{corollary}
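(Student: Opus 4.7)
The plan is to reduce the corollary to Proposition \ref{prop:metric_connection} by interpreting the time derivative along $\gamma$ as differentiation in the spacetime direction $\partial_1 + \dot\gamma(t) \in T_{(t,\gamma(t))}\MM$. Concretely, first I would consider the lifted curve $\tilde\gamma:[0,1]\to\MM$ defined by $\tilde\gamma(t) = (t,\gamma(t))$, whose velocity is exactly $\dot{\tilde\gamma}(t) = \partial_1 + \dot\gamma(t)$.

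Next, I would introduce the smooth function $f:\MM\to\RR$ given by $f(t,x) = \inp{X(t,x)}{Y(t,x)}_{g(t)}$. By the chain rule,
\begin{equation*}
\frac{\dd}{\dd t}\inp{X(t,\gamma(t))}{Y(t,\gamma(t))}_{g(t)} = \frac{\dd}{\dd t}f(\tilde\gamma(t)) = \dot{\tilde\gamma}(t)\,f,
\end{equation*}
where the last expression denotes the action of the tangent vector $\dot{\tilde\gamma}(t)\in T_{\tilde\gamma(t)}\MM$ on $f$. I would then pick any smooth local extension $Z \in \Gamma(T\MM)$ of $\dot{\tilde\gamma}$ around each point $\tilde\gamma(t)$ (which exists because $\tilde\gamma$ is an embedded smooth curve in $\MM$, being a graph over $\RR$). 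Applying Proposition \ref{prop:metric_connection} to this $Z$ yields
\begin{equation*}
Zf = \inp{\Nabla_Z X(t)}{Y(t)}_{g(t)} + \inp{X(t)}{\Nabla_Z Y(t)}_{g(t)}.
\end{equation*}

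To finish, I would evaluate both sides at $\tilde\gamma(t)$ and invoke the tensorial (i.e.\ $C^\infty$-linear) dependence of $\Nabla$ on its first slot: both $\Nabla_Z X$ and $\Nabla_Z Y$ at the point $\tilde\gamma(t)$ depend only on $Z(\tilde\gamma(t)) = \partial_1 + \dot\gamma(t)$, not on the chosen extension. This immediately gives the claimed identity.

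The only mild obstacle is making sure the notation $\Nabla_{\partial_1 + \dot\gamma(t)}X(t)$ in the statement is legitimate, since $\partial_1+\dot\gamma(t)$ is defined only along $\tilde\gamma$; this is precisely resolved by the $C^\infty$-linearity of $\Nabla$ in its first argument, so the construction is well-defined independently of the chosen smooth extension. No serious calculation is required beyond the chain rule and Proposition \ref{prop:metric_connection}.
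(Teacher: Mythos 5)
Your proposal is correct and follows essentially the same route as the paper: lift $\gamma$ to the spacetime curve $t\mapsto(t,\gamma(t))$, observe its velocity is $\partial_1+\dot\gamma(t)$, and apply Proposition \ref{prop:metric_connection}. Your extra remark on extending $\dot{\tilde\gamma}$ and invoking $C^\infty$-linearity in the first slot is a careful justification of a step the paper leaves implicit, not a different argument.
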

\begin{proof}
Consider the curve $\varphi:[0,1] \to \MM$ given by $\varphi(t) = (t,\gamma(t))$. From Proposition \ref{prop:metric_connection} it follows that
\begin{align*}
&\frac{\dd}{\dd t}\inp{X(t,\gamma(t))}{Y(t,\gamma(t))}_{g(t)}
\\
&=
\dot\varphi(t)\inp{X(t,\gamma(t))}{Y(t,\gamma(t))}_{g(t)}
\\
&=
\inp{\Nabla_{\dot\varphi(t)}X(t,\gamma(t))}{Y(t,\gamma(t))}_{g(t)} + \inp{X(t,\gamma(t))}{\Nabla_{\dot\varphi(t)}Y(t,\gamma(t))}_{g(t)}
\\
&=
\inp{\Nabla_{\partial_1 + \dot\gamma(t)}X(t)}{Y(t)}_{g(t)} + \inp{X(t)}{\Nabla_{\partial_1 + \dot\gamma(t)}Y(t)}_{g(t)}.
\end{align*}
Here, the last line follows from the fact that $\dot\varphi(t) = \partial_1 + \dot\gamma(t)$.
\end{proof}

\begin{remark}
If $X(t) = X$ for some fixed vector field $X \in \Gamma(TM)$, then $\partial_1X(t) = 0$, and we reduce to the setting in \cite{Cou11}. If we consider another stationary vector field $Y(t) = Y \in \Gamma(TM)$ and a curve $\gamma:[0,1] \to M$, we have
\begin{multline}
\frac{\dd}{\dd t}\inp{X(\gamma(t))}{Y(\gamma(t))}_{g(t)} = (\partial_1g(t))(X(\gamma(t)),Y(\gamma(t)))~+\\
+ \inp{\Nabla_{\dot\gamma(t)}^tX(\gamma(t))}{Y(\gamma(t))}_{g(t)} + \inp{X(\gamma(t))}{\Nabla_{\dot\gamma(t)}^tY(\gamma(t))}_{g(t)}.
\end{multline}
\end{remark}

Corollary \ref{cor:metric_connection} inspires us to define a notion of a time-dependent vector field being parallel along a curve in $M$ with respect to a collection $\{g(t)\}_{t\in[0,1]}$ of Riemannian metrics. We have the following definition.

\begin{definition}\label{def:time_parallel_transport}
Let $\gamma:[0,1] \to M$ be a curve. A time-dependent vector field $X(t)$ along $\gamma$ is said to be \emph{parallel along} $\gamma$ \emph{with respect to} $\{g(t)\}_{t\in[0,1]}$ if it is parallel along the curve $(t,\gamma(t))$ in $\MM$ with respect to the connection $\Nabla$. More precisely, $X(t)$ is parallel along $\gamma$ if and only if for all $t\in[0,1]$ we have
$$
\Nabla_{\partial_1 + \dot\gamma(t)}X(t)(\gamma(t)) = 0.
$$
\end{definition}

\begin{remark}
If $X(t)$ and $Y(t)$ are time-dependent vector fields which are parallel along $\gamma$ with respect to $\{g(t)\}_{t\in[0,1]}$, then by Corollary \ref{cor:metric_connection} we have
$$
\frac{\dd}{\dd t} \inp{X(\gamma(t))}{Y(\gamma(t))}_{g(t)} = 0.
$$
This shows that the inner product between parallel vector fields is constant. In particular, by taking $Y(t) = X(t)$, we find that $|X(t,\gamma(t))|_{g(t)}$ is constant along $\gamma(t)$.
\end{remark}

\subsection{Horizontal lift}\label{subsection:horizontal_lift}

The frame bundle $FM$ over $M$ is the bundle with fibres given by 
$$
F_xM = \{u:\RR^d \to T_xM| u \mbox{ linear isomorphism}\}
$$
The frame bundle is a principal bundle with structure group $GL(d,\RR)$, the set of all invertible $d\times d$ matrices. It is a manifold, with the projection $\pi:FM \to M$ being a smooth map. Furthermore, the tangent bundle of $FM$ can be split in two parts, namely in directions in $M$ (defining a connection on $M$) and in the direction of the frames, i.e., vectors tangent to the fibres of $FM$. 
More precsiely, if $V \in T_uFM$ is tangent to the fibre $F_{\pi u}M$, then $V$ is said to be \emph{vertical}. Consequently, $V \in T_uFM$ is vertical if and only if it is the tangent vector of a curve in $FM$ that remains inside $F_{\pi u}M$. We denote the vertical subspace of $T_uFM$ by $V_uFM$. If we consider the map $L_u: GL(d,\RR) \to F_{\pi u}M$ given by $L_ug = ug$, then a basis of $V_uFM$ is given by
\begin{equation}\label{eq:vertical_basis}
V_{ij}(u) = \dd L_u(I)(E_{ij}),
\end{equation}
where $E_{ij}$ is the matrix of all zeros, except for a 1 in position $(i,j)$.\\

A subspace $H_u$ of $T_uFM$ such that $T_uFM = H_uFM \oplus V_uFM$ is called a horizontal subspace. Choosing a collection $\{H_u\}_{u\in FM}$ of horizontal subspaces smoothly depending on $u$ is equivalent to choosing a connection on $M$, see e.g. \cite[Chapter 8]{Spi79vol2} or \cite[Chapter 2]{KN63}.

We show how to obtain the collection of subspaces $\{H_u\}_{u\in FM}$ when $M$ is equipped with a connection $\Nabla$.  First, a \emph{horizontal lift of a curve} $\gamma:[0,1] \to M$ is a curve $u:[0,1] \to FM$ such that for all $a \in \RR^d$ we have
$$
\Nabla_{\dot\gamma(s)}(u(s)a) = 0
$$
for all $s \in [0,1]$. This horizontal lift exists for all time, and is unique once we fix the initial frame $u(0) = u$, see e.g. \cite{Spi79vol2,KN63}. Intuitively, the horizontal lift of $\gamma$ can be thought of as a parallel selection of frames along $\gamma$. 

To define the \emph{horizontal lift of a vector} $X \in T_pM$ via some frame $u \in F_pM$, consider a curve $\gamma$ with $\gamma(0) = p$ and $\dot\gamma(0) = X$. If $u(s)$ is the horizontal lift of $\gamma$ with $u(0) = u$, then we define the horizontal lift $X^*(u) = \dot u(0)$. With the horizontal lift at hand, for $u \in FM$, we set
$$
H_u = \{X^*(u)| X \in T_{\pi u}M\}.
$$ 

Finally, given $a \in \RR^d$ and $u \in F_pM$, we have that $ua \in T_pM$, so that we can define its horizontal lift. We denote this by $H(u)a$, which is thus given by
\begin{equation}\label{eq:horizontal_lift_frame_vector}
H(u)a = (ua)^*(u).
\end{equation}

\subsubsection{Horizontal lift with respect to a family of metrics}

Instead of performing horizontal lift with respect to a fixed connection, we wish to define it with respect to a time-dependent family of connections. More precisely, we wish to define the horizontal lift with respect to the family of Levi-Civita connections associated to the collection $\Gg = \{g(t)\}_{t\in[0,1]}$ of metrics on $M$. To do this, we use the parallel transport given in Definition \ref{def:time_parallel_transport}. %to define the horizontal lift with respect to a family $\{g(t)\}_{t\in[0,1]}$ of metrics of a curve in $M$ to the frame bundle $FM$.

\begin{definition}\label{def:horizontal_lift}
Let $\gamma:[0,1] \to M$ be a curve in $M$. A curve $u(t) \in FM$ is called a \emph{horizontal lift of $\gamma$ with respect to} $\{g(t)\}_{t\in[0,1]}$ if for all $a \in \RR^d$ we have that $u(t)a$ is parallel along $\gamma$ with respect to $\{g(t)\}_{t\in [0,1]}$, i.e., for all $a \in \RR^d$ we have
$$
\Nabla_{\partial_1 + \dot\gamma(t)}(u(t)a) = 0
$$
for all $t \in [0,1]$.
\end{definition}  

If $u(t)$ is the horizontal lift with respect to $\{g(t)\}_{t\in[0,1]}$ of a curve $\gamma$, then by Corollary \ref{cor:metric_connection} we have for all $a \in \RR^d$ that
\begin{equation}\label{eq:derivative_distance_lift}
\frac{\dd}{\dd t}|u(t)a|_{g(t)} = 0, 
\end{equation}
i.e., $|u(t)a|_{g(t)} = |u(0)a|_{g(0)}$ for all $t \in [0,1]$. Consequently, if $u(0):\RR^d \to (T_{\gamma(0)}M,g(0))$ is an isometry, then $u(t):\RR^d \to (T_{\gamma(t)}M,g(t))$ is an isometry for all $t \in [0,1]$.

 We use this observation to show that the horizontal lift with respect to $\{g(t)\}_{t\in[0,1]}$ of metrics exists for all time, and is unique once an initial (orthonormal) frame is given. We do this by showing that the horizontal lift defined in \ref{def:horizontal_lift} is a special instance of a horizontal lift from the manifold $\MM = \RR \times M$ to a principle fibre bundle over $\MM$. To this end, consider the bundle $\Oo$ over $\MM$ with fibres given by
\begin{equation}\label{eq:set_orthonormal_frames_t}
\Oo_{(t,x)} = \{u:\RR^d \to (T_xM,g(t))| u \mbox{ isometry}\},
\end{equation}
i.e., $\Oo_{(t,x)}$ consists of the orthonormal frames for $T_xM$ with respect to the metric $g(t)$. 

The bundle $\Oo$ is a principal bundle with structure group $G = O(d)$, the orthogonal group. Now, let $\gamma(t)$ be a curve in $M$ with horizontal lift $u(t)$ as in Definition \ref{def:horizontal_lift}, such that $u(0)$ is an orthonormal frame for $T_{\gamma(0)}M$ with respect to $g(0)$. From \eqref{eq:derivative_distance_lift} it follows that for all $t \in [0,1]$, $u(t)$ is orthonormal with respect to $g(t)$, and hence $u(t) \in \Oo_{(t,\gamma(t))}$ for all $t \in [0,1]$. If we now define $\varphi(t) = (t,\gamma(t)) \in \MM$, then $\dot\varphi(t) = \partial_1 + \dot\gamma(t)$. Putting everything together, this implies that the curve $u(t)$ can also be interpreted as the horizontal lift of $\varphi(t)$ with respect to the connection $\Nabla$ as in \eqref{eq:connection_spacetime} to the bundle $\Oo$. Because $\Oo$ is a principal bundle, it follows that a horizontal lift of $\varphi(t) = (t,\gamma(t))$ exists for all time $t \in [0,1]$ and is unique if the initial condition $u(0) = u_0 \in \Oo_{(0,\gamma(0))}$ is fixed. For this, we refer to (among others) \cite[Chapter 8]{Spi79vol2}. This implies that the horizontal lift defined in Definition \ref{def:horizontal_lift} always exists, and is unique if an initial orthonormal frame with respect to $g(0)$ is given.\\

As explained in the previous section, if a horizontal lift for curves is defined, we can use this to define the horizontal lift of tangent vectors. In particular, the horizontal lift of curves in $\MM$ to the bundle $\Oo$ with respect to the connection $\Nabla$ in \eqref{eq:connection_spacetime} allows us to lift tangent vectors $X \in T_{(t,x)}\MM$ to $T\Oo$. In what follows, we denote this lift by $X^*$. 

Since we also have a notion of horizontal lifts of curves in $M$ with respect to $\{g(t)\}_{t\in[0,1]}$, we can use this to define the horizontal lift of a tangent vector in $TM$ with respect to $\{g(t)\}_{t\in[0,1]}$.

\begin{definition}\label{definition:lift_vector_family_metrics}
Let $X \in T_pM$ and $u \in \Oo_{(s,p)}$. Let $\gamma:[0,1] \to M$ be a curve with $\gamma(s) = p$ and $\dot\gamma(s) = X$. Denote by $u(t)$ the horizontal lift of $\gamma$ with respect to $\Gg = \{g(t)\}_{t\in[0,1]}$, satisfying $u(s) = u$. We define the \emph{horizontal lift of} $X$ \emph{via} $u$ \emph{with respect to} $\{g(t)\}_{t\in[0,1]}$ by $X^{*\Gg}(u) = \dot u(s)$.
\end{definition}

\begin{remark}\label{remark:equality_horizontal_lifts}
If $\gamma$ is a curve in $M$, we can identify its horizontal lift with respect to $\{g(t)\}_{t\in[0,1]}$ with the horizontal lift of the curve $\varphi(t) = (t,\gamma(t))$ in $\MM$ with respect to the connection $\Nabla$ defined in \eqref{eq:connection_spacetime}. This implies that $\dot u(s)$ is the horizontal lift of $\dot\varphi(s) = \partial_1 + \dot\gamma(s)$ to $T_{u(s)}\Oo_{(s,\gamma(s))}$ via $u(s)$. Consequently, we have that $X^{*\Gg}(u) = (\partial_1 + X)^*(u)$.  
\end{remark}

%\begin{definition} 
%Let $\alpha\partial_t + X \in T_{(s,p)}\MM$ and $u \in \Ff_{s,p}$. Consider a curve $\gamma:(-\epsilon,\epsilon) \to M$ with $\gamma(0) = p$ and $\dot\gamma(0) = X$ and define the curve $\varphi:(-\epsilon,\epsilon) \to \MM$ by $\varphi(t) = (s + \alpha t, \gamma(t))$. Let $u(t)$ be the horizontal lift of $\varphi$ with $u(0) = u$. We define the \emph{horizontal lift of the vector} $\alpha\partial_t + X$ \emph{via} $u$ \emph{with respect to the collection} $\{g(t)\}_{t\in[0,1]}$ by $(\alpha\partial_t +X)^*(u) = \dot u(0)$.
%\end{definition}

%{\color{red} REMOVE? We now define the space
%$$
%H_u = \{(\alpha\partial_t + X)^*(u)| \alpha\partial_t + X \in T_{(s,p)}\MM\}.
%$$
%This is a subspace of $T_u\Ff$ of dimension $d+1$, complementing the subspace of vertical vectors. Consequently, we have that $T_u\Ff = H_u \oplus V_u$ as direct sum.}

We now wish to relate the horizontal lift of $X$ via $u \in \Oo_{(s,p)}$, with respect to $\{g(t)\}_{t\in[0,1]}$ to the horizontal lift of $X$ via $u$ with respect to the metric $g(s)$. Before we get to this, we first need the following result, the proof of which is inspired by the proof of \cite[Proposition 1.2]{Cou11}.

\begin{proposition}\label{prop:lift_partial_t}
Let $u \in \Oo_{(s,p)}$. Then the horizontal lift of $\partial_1$ via $u$ with respect to the connection $\Nabla$ in \eqref{eq:connection_spacetime} is given by
$$
\partial_1^*(u) = -\frac12(\partial_1g(s))(ue_i,ue_j)V_{ij}(u).
$$
Here, $\{e_1,\ldots,e_d\}$ is the canonical basis of $\RR^d$ and $V_{ij}(u)$ are the canonical vertical basis vectors of $V_uFM$ defined in \eqref{eq:vertical_basis}. 
\end{proposition}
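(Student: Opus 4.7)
The plan is to unwind the definition of horizontal lift by writing down the curve in $\Oo$ whose tangent at $s$ is $\partial_1^*(u)$, parametrize it as $t\mapsto u\cdot A(t)\in F_pM$, and solve for $A'(s)$ using the defining ODE $\Nabla_{\partial_1}(u(t)a)=0$.

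First I would identify the correct curve. By the discussion preceding Definition \ref{definition:lift_vector_family_metrics} (or Remark \ref{remark:equality_horizontal_lifts} with the constant curve $\gamma(t)=p$), the horizontal lift $\partial_1^*(u)$ equals $\dot u(s)$, where $u(t)\in\Oo_{(t,p)}$ is the horizontal lift of the curve $t\mapsto(t,p)$ in $\MM$ with $u(s)=u$; equivalently, $u(t)a$ is parallel along the constant curve at $p$ with respect to $\{g(t)\}_{t\in[0,1]}$ for every $a\in\RR^d$. Since $\pi_{FM}(u(t))=p$ for all $t$, the curve $u(t)$ stays in the single fibre $F_pM$, so $\dot u(s)\in V_uFM$ and we may expand it as $\dot u(s)=c^{ij}V_{ij}(u)$ for some scalars $c^{ij}$, which is what we need to compute.

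Next I would compute these scalars. Write $u(t)=u\circ A(t)$ for a smooth curve $A:(-\delta,\delta)+s\to GL(d,\RR)$ with $A(s)=I$; then by the definition of $V_{ij}$ we have $\dot u(s)=A'(s)_{ij}V_{ij}(u)$. The parallelism condition $\Nabla_{\partial_1}(u(t)a)=0$ reads, by \eqref{eq:connection_spacetime},
\begin{equation}
\partial_1(u(t)a)+\tfrac12(\partial_1g(t))(u(t)a,\cdot)^{\#_t}=0
\end{equation}
for every $a\in\RR^d$. Evaluating at $t=s$ where $A(s)=I$ gives the identity in $T_pM$
\begin{equation}
u\,A'(s)a=-\tfrac12(\partial_1g(s))(ua,\cdot)^{\#_s}.
\end{equation}
Taking the $g(s)$-inner product of both sides with $ub$, and using that $u\in\Oo_{(s,p)}$ is an isometry $(\RR^d,\langle\cdot,\cdot\rangle)\to(T_pM,g(s))$, the left-hand side becomes $\langle A'(s)a,b\rangle=A'(s)_{ij}a^jb^i$ and the right-hand side becomes $-\tfrac12(\partial_1g(s))(ua,ub)=-\tfrac12(\partial_1g(s))(ue_j,ue_i)a^jb^i$. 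Since $a,b$ are arbitrary, we read off
\begin{equation}
A'(s)_{ij}=-\tfrac12(\partial_1g(s))(ue_j,ue_i).
\end{equation}

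Finally, using the symmetry of the bilinear form $\partial_1g(s)$ to swap the indices, I conclude
\begin{equation}
\partial_1^*(u)=A'(s)_{ij}V_{ij}(u)=-\tfrac12(\partial_1g(s))(ue_i,ue_j)V_{ij}(u),
\end{equation}
which is the claimed formula. The main potential pitfall is bookkeeping: one must recognize that although $u(t)\in\Oo_{(t,p)}$ for each $t$, the matrix $A'(s)$ need not be skew-symmetric — indeed the computation above shows it is symmetric, which is exactly what reflects the change of the metric in the time direction and is consistent with $u(t)$ leaving the $g(s)$-orthonormal frames while staying in the bundle $\Oo$ over $\MM$.
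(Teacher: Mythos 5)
Your proof is correct and follows essentially the same route as the paper: identify $\partial_1^*(u)$ as the (necessarily vertical) velocity of the horizontal lift of the purely temporal curve through $(s,p)$, extract the coefficients with respect to $V_{ij}(u)$ from the parallelism ODE $\partial_1(u(t)a)+\tfrac12(\partial_1g(t))(u(t)a,\cdot)^{\#_t}=0$ by pairing with frame vectors and using that $u$ is a $g(s)$-isometry, and finish with the symmetry of $\partial_1 g(s)$. The only cosmetic difference is that you parametrize the fibre curve as $u\circ A(t)$ and solve for $A'(s)$, where the paper expands $\dot u(t)=c_{\alpha\beta}(t,u(t))V_{\alpha\beta}(u(t))$ and computes the coefficients via the evaluation map; these are the same computation.
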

\begin{proof}
Consider the curve $\eta(t) = (s+t,p)$. Then $\dot\eta(0) = \partial_1$. Let $u(t)$ be the horizontal lift of $\eta(t)$ with $u(0) = u$. Then $\partial_1^*(u) = \dot u(0)$. Since $\dot\eta(t) = \partial_1$, we have for all $a \in \RR^d$ that
$$
\Nabla_{\partial_1}(v(t)a) = 0,
$$
which gives via \eqref{eq:connection_spacetime} that
\begin{equation}\label{eq:derivative_lift_partial_t}
\partial_1(u(t)a) + \frac12(\partial_1g(s+t))(u(t)a,\cdot)^{\#_{s+t}} = 0.
\end{equation}

Since $u(t) \in F_pM$ for all $t$, we have that $\dot u(t) \in V_{u(t)}FM$. Consequently, we can write
$$
\dot u(t) = c_{\alpha\beta}(t,u(t))V_{\alpha\beta}(u(t)),
$$
where $V_{\alpha\beta}$ are the canonical vertical basis vector fields defined in \eqref{eq:vertical_basis}. Note that $u(t)a = \ev_{a}(u(t))$, where $\ev_a:FM \to TM$ is evaluation in $a$. From this it follows that $\partial_1(u(t)a) = \dd(\ev_{a})(u(t))(\dot u(t))$. Furthermore, note that
\begin{align*}
\dd(\ev_{a})(u(t))(V_{\alpha\beta}(u(t))
&=
\dd(\ev_a\circ L_{u(t)})(I)(E_{\alpha\beta})
\\
&=
\left.\frac{\dd}{\dd s}\right|_{s=0} u(t)(I + sE_{\alpha\beta})a
\\
&=
u(t)(a_\beta e_\alpha),
\end{align*}
where we write $a = a_\beta e_\beta$.

By linearity, we find for every $i = 1,\ldots,d$ that 
\begin{align*}
\partial_1(u(t)e_i) 
&= 
c_{\alpha\beta}(t,u(t))\dd~\ev_{e_i}(u(t))(V_{\alpha\beta}(u(t)))
\\
&= 
c_{\alpha\beta}(t,u(t))u(t)\delta_{i\beta}e_\alpha
\\
&= 
 c_{\alpha i}(t,u(t)) u(t)e_\alpha.
\end{align*}

Furthermore, since $\partial_1(u(t)e_i) = -\frac12(\partial_1g(s+t))(u(t)e_i,\cdot)^{\#_{s+t}}$ by \eqref{eq:derivative_lift_partial_t}, we have
$$
\inp{\partial_1(u(t)e_i)}{u(t)e_j}_{g(s+t)} = -\frac12(\partial_1g(s+t))(u(t)e_i,u(t)e_j)
$$
for every $j = 1,\ldots,d$. Now, the left hand side is given by
\begin{align*}
\inp{\partial_1(u(t)e_i)}{u(t)e_j}_{g(s+t)} 
&= 
c_{\alpha i}(t,u(t))\inp{u(t)e_\alpha}{u(t)e_j}_{g(s+t)}
\\
&=
c_{\alpha i}(t,u(t))\inp{e_\alpha}{e_j}_{\RR^d}
\\
&=
c_{ji}(t,u(t)).
\end{align*}
Here we used in the second line that $u(t) \in \Oo_{(s+t,p)}$, to that it is an isometry from $\RR^d$ to $(T_pM,g(s+t))$.

Combining the two equalities above, we find for every $i,j = 1,\ldots,d$ that
$$
c_{ji}(t,u(t)) = -\frac12(\partial_1g(s+t))(u(t)e_i,u(t)e_j).
$$
Because $\partial_1g(s+t)$ is symmetric, it follows that $c_{ij} = c_{ji}$. Consequently, we can write
$$
\dot u(t) = -\frac12(\partial_1g(s+t))(u(t)e_i,u(t)e_j)V_{ij}(u(t)), 
$$
so that
$$
\partial_1^*(u) = \dot u(0) = -\frac12(\partial_1g(s))(ue_i,ue_j)V_{ij}(u),
$$
where we used that $u(0) = u$.
\end{proof}

From Proposition \ref{prop:lift_partial_t} we deduce the relation the horizontal lift of $X \in T_pM$ with respect to $\{g(t)\}_{t\in[0,1]}$ and with respect to the metric $g(s)$ at a specific time $s \in [0,1]$.

\begin{corollary}\label{cor:expression_lift_collection}
For $X \in T_pM$ and $u \in \Oo_{(s,p)}$ we have
$$
X^{*\Gg}(u) = X^{*_s}(u) - \frac12(\partial_1g(s))(ue_i,ue_j)V_{ij}(u),
$$
where $X^{*_s}(u)$ denotes the horizontal lift of $X$ via $u$, with respect to the metric $g(s)$, and the $e_i$ and $V_{ij}$ are as in Proposition \ref{prop:lift_partial_t}.
\end{corollary}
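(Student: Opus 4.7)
The plan is to combine Remark \ref{remark:equality_horizontal_lifts} with the linearity of the horizontal lift and Proposition \ref{prop:lift_partial_t}. First, I would invoke the remark to rewrite the left-hand side as $X^{*\Gg}(u) = (\partial_1 + X)^*(u)$, where $*$ denotes the horizontal lift to $\Oo$ with respect to the spacetime connection $\Nabla$ defined in \eqref{eq:connection_spacetime}. Since the horizontal subspace $H_u\Oo$ is a linear complement of the vertical subspace and the lift is induced by a linear projection determined by $\Nabla$, the map sending a tangent vector in $T_{(s,p)}\MM$ to its horizontal lift at $u$ is $\RR$-linear, so
$$
X^{*\Gg}(u) = X^*(u) + \partial_1^*(u).
$$

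Next, I would identify $X^*(u)$ with $X^{*_s}(u)$. The idea is that when restricted to tangent vectors with zero time-component, the horizontal distribution for $\Nabla$ on $\Oo$ coincides with the horizontal distribution for the Levi-Civita connection $\Nabla^s$ on the fibre $\Oo_{(s,\cdot)}$. To see this, choose a curve $\gamma_0:(-\delta,\delta) \to M$ with $\gamma_0(0) = p$ and $\dot\gamma_0(0) = X$, and lift it trivially in time to $\varphi(t) = (s,\gamma_0(t))$ in $\MM$, so that $\dot\varphi(0) = X$ as an element of $T_{(s,p)}\MM$. A horizontal lift $u(t) \in \Oo_{(s,\gamma_0(t))}$ of $\varphi$ must satisfy $\Nabla_{\dot\varphi(t)}(u(t)a) = 0$ for every $a \in \RR^d$. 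Since $\dot\varphi(t)$ has no $\partial_1$ component, formula \eqref{eq:connection_spacetime} reduces this to $\Nabla^s_{\dot\gamma_0(t)}(u(t)a) = 0$, which is exactly the defining equation for the $g(s)$-horizontal lift of $\gamma_0$ through $u$. Hence $X^*(u) = \dot u(0) = X^{*_s}(u)$.

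Finally, substituting this equality together with the explicit formula for $\partial_1^*(u)$ from Proposition \ref{prop:lift_partial_t} yields the claimed identity. The only delicate point is the reduction in the previous paragraph, where one must be careful to observe that the curve $\varphi$ keeps the first coordinate fixed at $s$, so that the connection formula \eqref{eq:connection_spacetime} yields the $g(s)$-Levi-Civita connection on the nose and the orthonormality condition $u(t) \in \Oo_{(s,\gamma_0(t))}$ is compatible with the $g(s)$-horizontal lift; both are essentially bookkeeping once \eqref{eq:connection_spacetime} and Remark \ref{remark:equality_horizontal_lifts} are in hand, so I do not expect any serious obstacle.
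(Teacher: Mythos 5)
Your proposal is correct and follows essentially the same route as the paper: decompose $X^{*\Gg}(u) = (\partial_1 + X)^*(u) = \partial_1^*(u) + X^*(u)$ via Remark \ref{remark:equality_horizontal_lifts} and linearity, identify $X^*(u) = X^{*_s}(u)$ by lifting the time-frozen curve $\varphi(t) = (s,\gamma_0(t))$ so that \eqref{eq:connection_spacetime} reduces to $\Nabla^s$, and then substitute Proposition \ref{prop:lift_partial_t}. No gaps.
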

\begin{proof}
From Remark \ref{remark:equality_horizontal_lifts} it follows that $X^{*\Gg}(u) = (\partial_1 + X)^*(u)$. Since $(\partial_1 + X)^*(u) = \partial_1^*(u) + X^*(u)$ (see e.g. \cite{Spi79vol2}), it follows from Proposition \ref{prop:lift_partial_t} that we are done once we show that $X^*(u) = X^{*_s}(u)$. To see the latter, consider a curve $\gamma:(-\epsilon,\epsilon) \to M$ with $\gamma(0) = p$ and $\dot\gamma(0) = X$ and define $\varphi:(-\epsilon,\epsilon) \to \MM$ by $\varphi(t) = (s,\gamma(t))$. Then $\varphi(0) = (s,p)$ and $\dot\varphi(0) = X$. Let $u(t)$ be the horizontal lift of $\varphi$ with $u(0) = u$. Since $\dot\varphi(t) = X$, we have
$$
\Nabla_X^s(u(t)a) = 0
$$
for every $a \in \RR^d$. Consequently, $u(t)$ is the horizontal lift of $\gamma(t)$ with respect to $\Nabla^s$, i.e., the Levi-Civita connection of $g(s)$. It follows that $X^*(u) = X^{*_s}(u)$ as desired.  
\end{proof}

\subsection{Development and anti-development of curves}\label{subsection:anti_development}

The idea is now to use the notion of a horizontal lift to associate to a curve in $M$ a curve in $\RR^d$ and vice versa. We have the following definition.

\begin{definition}
Let $\gamma:[0,1] \to M$ be a curve in $M$ and let $u(t)$ be a horizontal lift of $\gamma$ with respect to $\{g(t)\}_{t\in[0,1]}$. We define the \emph{anti-development of} $\gamma$ \emph{with respect to} $\{g(t)\}_{t\in[0,1]}$ as the curve $w:[0,1] \to \RR^d$ given by
\begin{equation}\label{eq:anti_development}
w(t) = \int_0^t u(s)^{-1}\dot\gamma(s) \ud s.
\end{equation}
\end{definition}

If we fix a frame $u \in \Oo_{(0,\gamma(0))}$ (see \eqref{eq:set_orthonormal_frames_t}), we can speak about the anti-development of $\gamma$ via $u$ with respect to $\{g(t)\}_{t\in[0,1]}$, since in that case the horizontal lift with respect to $\{g(t)\}_{t\in[0,1]}$ satisfying $u(0) = u$ is unique.\\

If $w(t)$ is the anti-development of $\gamma(t)$ with respect to $\{g(t)\}_{t\in[0,1]}$ via the horizontal lift $u(t)$, then \eqref{eq:anti_development} implies that
$$
\dot w(t) = u(t)^{-1}\dot\gamma(t),
$$
which rewrites to
$$
\dot\gamma(t) = u(t)\dot w(t).
$$
Since both sides are elements of $T_{\gamma(t)}M$, we can consider their horizontal lifts with respect to the metric $g(t)$, which must be equal:
\begin{equation}\label{eq:expression_lift_antidevelopment}
H(t,u(t))\dot w(t) := (u(t)\dot w(t))^{*_t} = (\dot\gamma(t))^{*_t}.
\end{equation}
Here $H(t,u(t)$ is as defined in \eqref{eq:horizontal_lift_frame_vector}, but with respect to the Levi-Civita connection $\Nabla^t$ for the metric $g(t)$. Furthermore, since $u(t)$ is the horizontal lift of $\gamma$ with respect to $\{g(t)\}_{t\in[0,1]}$, we have that $\dot u(t) = \dot\gamma(t)^{*\Gg}$. Consequently, by applying Corollary \ref{cor:expression_lift_collection} and using \eqref{eq:expression_lift_antidevelopment} we obtain
\begin{align*}
\dot u(t) 
&=
\dot\gamma(t)^{*\Gg}
\\
&= 
(\dot\gamma(t))^{*_t} - \frac12(\partial_tg(t))(u(t)e_i,u(t)e_j)V^{ij}(u(t))
\\
&=
H(t,u(t))\dot w(t) - \frac12(\partial_tg(t))(u(t)e_i,u(t)e_j)V^{ij}(u(t)).
\end{align*}

We thus obtained a differential equation for the horizontal lift $u$ with respect to $\{g(t)\}_{t\in[0,1]}$ in terms of the anti-development $w$. This shows how to invert the operation of taking the anti-development of a curve. We make the following definition.
\begin{definition}\label{def:time_development}
Let $w:[0,1] \to \RR^d$ be a curve in $\RR^d$ and fix $u_0 \in \Oo_{(0,p)}$. Let $u:[0,1] \to FM$ be the solution of
\begin{equation}\label{eq:time_development}
\dot u(t) = H(t,u(t))\dot w(t) - \frac12(\partial_tg(t))(u(t)e_i,u(t)e_j)V^{ij}(u(t))
\end{equation}
with $u(0) = u_0$, where $H(t,u(t))$ is as defined in \eqref{eq:horizontal_lift_frame_vector} for the Levi-Civita connection $\Nabla^t$ of the metric $g(t)$. Then the curve $\gamma(t) = \pi u(t)$ is called the \emph{development of $w$ onto $M$ with respect to $\{g(t)\}_{t\in[0,1]}$}.
\end{definition}

Sometimes, the curve $u$ is referred to as the development of $w$, rather than the projection of $u$ onto $M$.

%Summarizing the above constructions, given a curve $\gamma(t)$ in $M$, we can define a horizontal lift $u(t)$ with respect to the collection $\{g(t)\}_{t\in[0,1]}$ of metrics. We can then associate to $\gamma(t)$ a curve in $\RR^d$, its anti-development, by setting $w(t) = \int_0^t u(s)^{-1}\dot\gamma(s)\ud s$. On the other hand, given a curve $w(t)$ in $\RR^d$, we can define a $\{g(t)\}$-horizontal curve $u(t)$ by solving the differential equation
%$$
%\dot u(t) = H(t,u(t))\dot w(t) - \frac12(\partial_tg(t))_{ij}(u(t))V^{ij}(u(t)),
%$$
%and project down to the curve $\gamma(t) = \pi(u(t))$. This gives a one-to-one mapping between curves in $M$ and curves in $\RR^d$. \\

\subsection{Horizontal lift of $g(t)$-Brownian motion}\label{subsection:lift_BM}

In this section we explain how a $g(t)$-Brownian motion may be obtained by solving a stochastic differential equation on $FM$, and projecting the solution down to the manifold. 

%It turns out that a $g(t)$-Brownian motion can be obtained as the development with respect to $\{g(t)\}_{t\in[0,1]}$ of a standard Brownian motion in $\RR^d$.
 
Malliavin's transfer principle (see e.g. \cite{Mal97}) suggests that constructions for manifold-valued curves can be extended to manifold-valued processes by replacing differential equations by Stratonovich stochastic differential equations. This is because Stratonovich integrals follow the ordinary fundamental theorem of calculus. This suggests that we can obtain a $g(t)$-Brownian motion as the development with respect to $\{g(t)\}_{t\in[0,1]}$ of a standard Brownian motion in $\RR^d$. 

More precisely, we replace the curve $w$ in \eqref{eq:time_development} by a standard $\RR^d$-valued Brownian motion, and interpret the so obtained stochastic differential equation in Stratonovich sense. In symbols this means that for $x_0 \in M$ fixed, we consider the solution $U_t$ of the Stratonovich stochastic differential equation
\begin{equation}\label{eq:SDE_time_BM}
\dd U_t = H_i(t,U_t)\circ \dd W_t^i - \frac12(\partial_1g(t))_{ij}(U_te_i,U_te_j)V^{ij}(U_t)\ud t,
\end{equation}
with $U_0 \in \Oo_{(0,x_0)}$ (see \eqref{eq:set_orthonormal_frames_t}). Here, $H_i(t,u(t)) = H(t,u(t))e_i$ where $H(t,u(t))$ is as defined in \eqref{eq:horizontal_lift_frame_vector} for the Levi-Civita connection $\Nabla^t$ of the metric $g(t)$, and $\{e_1,\ldots,e_d\}$ denotes the standard basis of $\RR^d$. The following is \cite[Proposition 1.4]{Cou11}, see also \cite[Proposition 1.3]{ACT08}.

\begin{proposition}\label{prop:projection_time_BM}
Let $U_t$ be the process on $FM$ solving equation \eqref{eq:SDE_time_BM}. Then $X_t = \pi U_t$ is a $g(t)$-Brownian motion on $M$ starting in $x_0 \in M$. 
\end{proposition}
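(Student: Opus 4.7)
The plan is to verify the defining martingale property of a $g(t)$-Brownian motion directly: for any $f \in C^\infty(M)$ I want to show that
$$
f(X_t) - f(X_0) - \tfrac12\int_0^t \Delta_M^s f(X_s) \ud s
$$
is a local martingale. Since $X_t = \pi U_t$, this amounts to computing the generator of the $FM$-valued diffusion $U_t$ acting on the lifted function $\tilde f := f \circ \pi \in C^\infty(FM)$.

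The first step is to check that the SDE \eqref{eq:SDE_time_BM} preserves the bundle $\Oo$, i.e.\ that $U_t \in \Oo_{(t,\pi U_t)}$ for all $t$, given $U_0 \in \Oo_{(0,x_0)}$. This is the stochastic analogue of identity \eqref{eq:derivative_distance_lift} derived in Section \ref{subsection:horizontal_lift}, and by Malliavin's transfer principle it follows from the deterministic computation: applying Stratonovich calculus to $t \mapsto \inp{U_t a}{U_t b}_{g(t)}$ for $a,b \in \RR^d$, the contributions from the Stratonovich term $H_i(t,U_t)\circ \dd W_t^i$ vanish by metric compatibility of $\Nabla^t$, while the vertical drift cancels exactly with $(\partial_1 g(t))(U_t a, U_t b)$ coming from differentiating $g(t)$. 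Thus $U_t$ stays orthonormal with respect to the current metric.

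Next, I would apply the Stratonovich version of Itô's formula to $\tilde f(U_t)$. Schematically, this yields
\begin{align*}
\tilde f(U_t) - \tilde f(U_0)
&= \int_0^t H_i(s,U_s)\tilde f \circ \dd W_s^i \\
&\quad - \tfrac12 \int_0^t (\partial_1 g(s))(U_s e_i,U_s e_j) V^{ij}(U_s)\tilde f \ud s \\
&\quad + \tfrac12 \int_0^t \sum_i H_i(s,U_s)\bigl(H_i(s,\cdot)\tilde f\bigr)(U_s) \ud s,
\end{align*}
where the last integral is the usual Stratonovich-to-Itô correction. Two simplifications occur. First, because each $V^{ij}$ is vertical and $\tilde f$ is constant along fibres of $\pi$, we have $V^{ij}\tilde f \equiv 0$, so the drift coming from the $\partial_1 g$ term disappears. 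Second, at each time $s$ the frame $U_s$ is $g(s)$-orthonormal by the previous step, so the $\{H_i(s,U_s)\}_i$ are the standard horizontal lifts of an orthonormal frame for $(T_{\pi U_s}M, g(s))$; Bochner's horizontal Laplacian identity then gives
$$
\sum_i H_i(s,u)\bigl(H_i(s,\cdot)\tilde f\bigr)(u) = (\Delta_M^s f)(\pi u).
$$
Substituting this into the expansion above identifies the bounded-variation part with $\tfrac12\int_0^t \Delta_M^s f(X_s)\ud s$, and the remaining Stratonovich integral against $\dd W^i$ is a local martingale (its Itô correction has already been accounted for).

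The step I expect to be the most delicate is the preservation of orthonormality under \eqref{eq:SDE_time_BM}, since one must verify that the time-dependent correction term $-\tfrac12(\partial_1 g(t))(U_t e_i,U_t e_j)V^{ij}(U_t)$ is precisely what compensates for the non-stationarity of the metric; this is the stochastic counterpart of Proposition \ref{prop:lift_partial_t} and is exactly why that correction was built into the definition of horizontal lift with respect to $\{g(t)\}_{t\in[0,1]}$. Once orthonormality is in hand, the horizontal Laplacian identity applies pointwise in $s$ and the remainder of the argument is standard.
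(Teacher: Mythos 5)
Your argument is correct, but note that the paper does not prove this proposition at all: it is quoted verbatim from the literature (\cite[Proposition 1.4]{Cou11}, see also \cite[Proposition 1.3]{ACT08}). Your proof is essentially the standard one given in those references: Stratonovich--It\^o expansion of $f\circ\pi$ applied to $U_t$, the observation that the vertical fields $V^{ij}$ annihilate functions lifted from $M$, preservation of $g(t)$-orthonormality of $U_t$ (the stochastic counterpart of \eqref{eq:derivative_distance_lift} and Proposition \ref{prop:lift_partial_t}, which is indeed the one step requiring care), and the horizontal Bochner identity $\sum_i H_i(s,u)\bigl(H_i(s,\cdot)(f\circ\pi)\bigr)(u)=\Delta_M^s f(\pi u)$ valid at $g(s)$-orthonormal frames. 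All the ingredients you invoke are available in the paper's Section \ref{section:lift_development}, so the proof goes through as you describe.
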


%%%%%%%%%%%%%%%%%%%%%%%%%%%%%%%%%%%%%%%%%%%%%%%%%%%%%%%%%%%%%%%%%%%%%%%%%%%%%%%%

\section{Proof of Theorem \ref{theorem:Schilder_time} using embeddings} \label{section:proof}

In this section we prove Theorem \ref{theorem:Schilder_time}, the analogue of Schilder's theorem for $g(t)$-Brownian motion. Let us recall the statement of the theorem. 

\begin{theorem}\label{theorem:Schilder_time_proof}
Let $M$ be a Riemannian manifold and let $\{g(t)\}_{t\in[0,1]}$ be a collection of Riemannian metrics, smoothly depending on $t$. Fix $x_0 \in M$, and assume that the $g(t)$-Brownian motion with $X_0 = x_0$ exists for all time $t \in [0,1]$. Furthermore assume that for every $\epsilon > 0$, the continuous process $X_t^\epsilon$ generated by $\frac\epsilon2\Delta_M^t$ exists for all time $t \in [0,1]$. Then $\{X_t^\epsilon\}_{\epsilon > 0}$ satisfies the large deviation principle in $C([0,1];M)$ with good rate function given by
\begin{equation}\label{eq:rate_function_timeBM_proof}
I_M(\gamma) =
\begin{cases}
\frac12\int_0^1 |\dot\gamma(t)|_{g(t)}^2\ud t,		& \gamma \in H_{x_0}^1([0,1];M),\\
\infty									& \mbox{otherwise.}
\end{cases}
\end{equation}
\end{theorem}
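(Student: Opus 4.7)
The plan is to execute the strategy outlined in Section \ref{subsection:sketch_proof}: transfer the LDP question upstairs to the frame bundle, solve it there by embedding into Euclidean space and applying Freidlin-Wentzell theory, and project back down via the contraction principle, with a separate compact-containment argument using Lyapunov functions. First, I would fix a smooth embedding $\iota: FM \hookrightarrow \RR^N$ (existing by Whitney's theorem) and push forward the Stratonovich SDE \eqref{eq:horizontal_sketch_proof} for the rescaled horizontal lift $U_t^\epsilon$ of $X_t^\epsilon$ with respect to $\{g(t)\}_{t\in[0,1]}$ into an It\^o SDE on $\RR^N$ whose coefficients depend jointly on $(t,x)$, with drift of order $1$ and diffusion of order $\sqrt\epsilon$. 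After extending these coefficients to jointly Lipschitz functions on $[0,1]\times\RR^N$ via a smooth cutoff (preserving behavior on a neighborhood of $\iota(K)$ for a chosen compact $K \subset FM$), the extended SDE falls into the scope of Theorem \ref{theorem:time_inhomogeneous_FW}.

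Applying Freidlin-Wentzell then yields the LDP for $\iota(U_t^\epsilon)$ in $C([0,1];\RR^N)$, and since $\iota$ is a homeomorphism onto its image and $\pi: FM \to M$ is continuous, two successive applications of the contraction principle (\cite[Theorem 4.2.1]{DZ98}) push this rate function down to $X_t^\epsilon = \pi(U_t^\epsilon)$. The resulting rate function assigns to a curve $\gamma$ the value $\frac{1}{2}\int_0^1|\dot w(t)|^2\,\ud t$, where $w$ is an $H^1$ control producing $\gamma$ via the development equation \eqref{eq:time_development} from some initial frame $u_0 \in \Oo_{(0,x_0)}$. By the anti-development formula \eqref{eq:anti_development}, $\dot w(t) = u(t)^{-1}\dot\gamma(t)$, and since $u(t):\RR^d \to (T_{\gamma(t)}M, g(t))$ is an isometry by \eqref{eq:derivative_distance_lift}, this yields $|\dot w(t)|_{\RR^d}^2 = |\dot\gamma(t)|_{g(t)}^2$, reproducing \eqref{eq:rate_function_timeBM_proof}. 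The choice of $u_0$ is immaterial, since rotating it rotates $w$ by an element of $O(d)$, preserving $|\dot w|^2$; curves $\gamma \notin H_{x_0}^1$ receive infinite rate because no $H^1$ control can produce them.

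The step that requires the most care is passing from the compact-set version of the LDP (forced by the Lipschitz extension, which only faithfully represents the dynamics on a precompact region of $FM$) to the full LDP on $C([0,1];M)$. For this I would invoke the Lyapunov-function approach of Section \ref{subsection:compact_containment}: for an exhaustion $\{K_n\}$ of $M$ by compact sets, establish exponential tightness in the form $\limsup_{\epsilon \to 0} \epsilon \log \PP(X_\cdot^\epsilon \text{ exits } K_n) \to -\infty$ as $n\to\infty$. Combined with the LDPs on the compact pieces, the standard exponential-approximation gluing produces the full LDP and simultaneously yields goodness of $I_M$. The conceptual subtlety running through the whole argument is that the drift in \eqref{eq:horizontal_sketch_proof} is of order $1$ rather than $\sqrt\epsilon$: it is essential that this deterministic term enters the skeleton equation \eqref{eq:time_development} governing the most probable paths rather than contributing to the rate, and its absence from the final rate function is precisely what is guaranteed by the anti-development computation above, which identifies $|\dot w|^2$ with $|\dot\gamma|_{g(t)}^2$ independently of $\partial_1 g$.
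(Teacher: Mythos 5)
Your proposal follows essentially the same route as the paper's proof: cut off the horizontal SDE on compact sets of $FM$ obtained by lifting an exhaustion of $M$, embed via Whitney, apply the time-inhomogeneous Freidlin--Wentzell theorem, pull back through $\iota^{-1}$ and project through $\pi$ by the contraction principle, identify the rate via the isometry property of the horizontal lift, and glue using the Lyapunov-function exit-time estimates. The argument is correct and matches the paper's structure, including the key observation that the order-one drift enters only the skeleton equation and cancels from the rate through the anti-development identity.
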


As we have seen in Proposition \ref{prop:projection_time_BM}, the horizontal lift $U_t$ with respect to $\{g(t)\}_{t\in[0,1]}$ of a $g(t)$-Brownian motion satisfies the Stratonovich stochastic differential equation
\begin{equation}\label{eq:SDE_time_BM_proof}
\dd U_t = H_i(t,U_t)\circ \dd W_t^i - \frac12(\partial_1g(t))_{ij}(U_te_i,U_te_j)V^{ij}(U_t)\ud t,
\end{equation}
with $U_0 = u_0 \in \Oo_{(0,x_0)}$, where $\Oo_{(0,x_0)}$ is defined in \eqref{eq:set_orthonormal_frames_t}. Similarly, if $\tilde X_t^\epsilon$ is a $g(\epsilon^{-1}t)$-Brownian motion, then its horizontal lift $\tilde U_t^\epsilon$ with respect to $\{g(t)\}_{t\in[0,1]}$ satisfies
$$
\dd \tilde U_t^\epsilon = H_i(\epsilon^{-1}t,\tilde U_t^\epsilon)\circ \dd W_t^i - \frac12(\partial_1g(\epsilon^{-1}t))_{ij}(\tilde U_t^\epsilon e_i,\tilde U_t^\epsilon e_j)V^{ij}(\tilde U_t^\epsilon)\ud t,
$$
with $\tilde U_0^\epsilon = u_0 \in \Oo(0,x_0)$. Finally, the horizontal lift of $X_t^\epsilon = \tilde X_{\epsilon t}^\epsilon$ with respect to $\{g(t)\}_{t\in[0,1]}$ is given by $U_t^\epsilon = \tilde U_{\epsilon t}^\epsilon$. This process satisfies
\begin{equation}\label{eq:horizontal_epsilon_proof}
\dd U_t^\epsilon = H_i(t,U_t^\epsilon)\circ \dd W_t^{\epsilon,i} - \frac12(\partial_tg(t))_{ij}(U_t^\epsilon e_i,U_t^\epsilon e_j)V^{ij}(U_t)\ud t,
\end{equation}
with $U_0 = u_0 \in \Oo_{(0,x_0)}$. Here, $W_t^\epsilon = W_{\epsilon t} = \sqrt\epsilon W_t$. As explained above Theorem \ref{theorem:Schilder_time}, $X_t^\epsilon$ is the rescaled process generated by $\frac{\epsilon}{2}\Delta_M^t$ that we are studying.

The stochastic differential equation for the horizontal lift of $X_t^\epsilon$ obtained in \eqref{eq:horizontal_epsilon_proof} is an important tool for proving Theorem \ref{theorem:Schilder_time}. However, before we can get to this, we first need to make some preparations.

\subsection{Compact containment}\label{subsection:compact_containment}

As part of the proof of Theorem \ref{theorem:Schilder_time}, we need to show that the process $X_t^\epsilon$ generated by $\frac\epsilon2\Delta_M^t$ stays within a compact set with high enough probability when $\epsilon$ tends to 0. In this section we discuss how this can be done via a general approach using Lyapunov functions.

\begin{definition}\label{definition:good_containment_function}
Let $\Hh_t:T^*M \to \RR$ be a collection of maps. A function $\Upsilon:M\to \RR$ is said to be a \emph{good containment function} for the collection $\Hh_t$ if the following are satisfied:
\begin{enumerate}
\item $\Upsilon \geq 0$ and there exists an $x_0 \in M$ such that $\Upsilon(x_0) = 0$.
\item $\Upsilon$ is twice continuously differentiable.
\item For every $c > 0$ the set $\{x \in M| \Upsilon(x) \leq c\}$ is compact.
\item $\sup_{t,x} \Hh_t(x,\dd\Upsilon(x)) < \infty$.
\end{enumerate} 
\end{definition}

We also need to introduce a notion of operator convergence. For this, we first consider bounded and uniform convergence on compact sets (buc), which we define next.

\begin{definition}
Let $\{f_n\}_{n\geq1}$ be a sequence in $C_b(M)$, and let $f \in C_b(M)$. We say that $f_n$ converges to $f$ boundedly, and uniformly on compacts, denoted by $\LIM_{n\to\infty} f_n = f$ if the following are satisfied:
\begin{enumerate}
\item $\sup_n ||f_n|| < \infty$.
\item For all $K \subset M$ compact, 
$$
\lim_{n\to\infty} \sup_{x\in K} |f_n(x) - f(x)| = 0.
$$
\end{enumerate}
\end{definition}

We now define our notion of operator convergence.

\begin{definition}
For every $n \geq 1$, let $A_n: \Dd(A_n) \subset C_b(M) \mapsto C_b(M)$ be an operator. The \emph{extended limit} $ex-\lim_{n\to\infty} A_n$ is defined as the collection $(f,g) \in C_b(M) \times C_b(M)$ for which there exists a sequence $\{f_n\}_{n\geq1}$ with $f_n \in \Dd(A_n)$ such that
$$
\LIM_{n\to\infty} f_n = f, \qquad \LIM_{n\to\infty} A_nf_n = g.
$$
An operator $A$ is said to be contained in $ex-\lim_{n\to\infty} A_n$ if the graph  $\{(f,Af)| f\in \Dd(A)\}$ is a subset of $ex-\lim_{n\to\infty} A_n$. 
\end{definition}

Before we get to the result we are going to use, we first need to define the operators we will be considering.

\begin{assumption}\label{assumption:Hamiltonian}
For every $n \geq 1$, let $A_n^t \subset C_b(M) \times C_b(M)$ be the (time-inhomogeneous) generator of a Markov process $X_n$. Assume that for every $x \in M$, the process $X_n$ started in $x$ is right-continuous and exists for all $t \in [0,1]$. Define the operator 
$$
H_n^tf = \frac1ne^{-nf}A_n^te^{nf}, \qquad e^{nf} \in \Dd(A_n^t).
$$
Suppose that for every $t$, there is an operator $H^t: \Dd(H^t) \subset C_b(M) \to C_b(M)$ with $\Dd(H^t) = C_c^\infty(M)$ and such that $H^t \subset ex-\lim_{n\to\infty} H_n^t$. Finally, assume that $t \mapsto H_t$ is measurable, and that $H^t$ can be written as $H^tf(x) = \Hh^t(x,\dd f(x))$ for some map $\Hh^t:T^*M \to \RR$.
\end{assumption}

The following result is an adaptation of Proposition A.15 in \cite{CK17}, which is based on Lemma 4.22 in \cite{FK06}. The proof is almost verbatim. 

\begin{proposition}\label{prop:containment}
Let Assumption \ref{assumption:Hamiltonian} be satisfied and assume that $X_n(0) = x \in M$ for all $n \geq 1$. Assume that $\Upsilon$ is a good containment function for the collection $\Hh_t$. Assume furthermore that $t \mapsto H_n^t$ is continuous for every $n \geq 1$. Then for every $\alpha > 0$, there exists a compact set $K_\alpha \subset M$ such that
$$
\limsup_{n\to\infty} \frac1n\log\PP\left(X_n(t) \notin K_\alpha \mbox{ for some } t\in [0,1]\right) \leq -\alpha.
$$
Moreover, the sequence $K_\alpha$ can be chosen increasing with $\bigcup_\alpha K_\alpha = M$.
\end{proposition}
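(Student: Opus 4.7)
The plan is to run an exponential Chebyshev argument driven by a Feynman--Kac-type local martingale built from $\Upsilon$. The starting observation is that for any $f$ with $e^{nf}\in\Dd(A_n^t)$, the time-inhomogeneous martingale problem for $A_n^t$ applied to $e^{nf}$ shows that
\begin{equation}
M_n^f(t) := \exp\left(n f(X_n(t)) - n f(X_n(0)) - n \int_0^t H_n^s f(X_n(s))\,\ud s\right)
\end{equation}
is a nonnegative local martingale, with continuity of $t\mapsto H_n^t$ ensuring the integrand is well defined pathwise. Choosing $f$ close to $\Upsilon$ and stopping at the first time $\Upsilon(X_n)$ crosses a threshold $R$ should yield a tail bound of order $e^{-nR}$.

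To execute this, I would first truncate $\Upsilon$, since $\Dd(H^t)=C_c^\infty(M)$ while $\Upsilon$ is generally not compactly supported. Picking a smooth non-decreasing $\psi_R:\RR\to[0,R+1]$ with $\psi_R(r)=r$ for $r\le R$ and $\psi_R(r)=R+1$ for $r\ge R+2$, I set $\Upsilon_R:=\psi_R\circ\Upsilon$, which lies in $C_c^\infty(M)$ by compactness of the sublevel sets of $\Upsilon$, and the good-containment bound yields $C_R:=\sup_{t\in[0,1],x}H^t\Upsilon_R(x)<\infty$. The extended-limit hypothesis then produces $f_n^R\in\Dd(H_n^t)$ with $\LIM_n f_n^R=\Upsilon_R$ and $\LIM_n H_n^t f_n^R=H^t\Upsilon_R$; a diagonal argument using continuity of $t\mapsto H_n^t$ and measurability of $t\mapsto H^t$ upgrades this to a uniform bound $\sup_{n\ge n_0,\,t\in[0,1],\,x}H_n^t f_n^R(x)\le C_R+1$. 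Setting $\tau_R:=\inf\{t\in[0,1]:\Upsilon(X_n(t))\ge R\}$, optional stopping applied to $M_n^{f_n^R}$ gives $\EE[M_n^{f_n^R}(\tau_R\wedge 1)]\le 1$; on $\{\tau_R\le 1\}$, continuity of $\Upsilon$ and buc convergence force $f_n^R(X_n(\tau_R))\ge R-o(1)$ while $f_n^R(x)\to\Upsilon_R(x)=0$, so isolating the indicator of $\{\tau_R\le 1\}$ inside the expectation yields
\begin{equation}
\PP(\tau_R\le 1)\le \exp\bigl(-n(R-C_R-1)+o(n)\bigr).
\end{equation}
Thus $\limsup_n \tfrac{1}{n}\log\PP(\tau_R\le 1)\le -(R-C_R-1)$, and choosing $R_\alpha$ with $R_\alpha-C_{R_\alpha}-1\ge\alpha$ together with $K_\alpha:=\{\Upsilon\le R_\alpha\}$ gives the claim; $K_\alpha$ is compact by the third defining property of a good containment function and increases to $M$ since $\Upsilon$ is finite everywhere.

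I expect the main obstacle to be this approximation step, and specifically the uniformity in $t\in[0,1]$ of the bound on $H_n^t f_n^R$: the extended-limit definition is a priori pointwise in $t$, so one must genuinely exploit continuity of $t\mapsto H_n^t$, measurability of $t\mapsto H^t$, and a careful diagonal selection to obtain simultaneous control in $n$, $t$, and $x$. This is where the technical input from \cite{CK17} and \cite{FK06} is reused, as the author signals; once that step is in place, everything else reduces to the standard exponential Chebyshev/Lyapunov argument sketched above.
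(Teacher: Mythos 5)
Your argument is the standard exponential-martingale/Lyapunov proof of exponential compact containment, which is exactly what the paper relies on: it gives no proof of its own, stating only that the result is an adaptation of Proposition A.15 of [CK17] (in turn based on Lemma 4.22 of [FK06]) whose proof carries over ``almost verbatim,'' and its accompanying remark confirms that the continuity of $t\mapsto H_n^t$ enters precisely to make $\int_0^s H_n^t f(X_n(t))\,\dd t$ well defined for the exponential local martingale, just as in your construction. Your truncation of $\Upsilon$, the optional stopping at the exit time, and the resulting bound $\limsup_{n}\tfrac1n\log\PP(\tau_R\le 1)\le -(R-C_R-1)$ followed by choosing $K_\alpha=\{\Upsilon\le R_\alpha\}$ match that template, so the proposal is correct and takes essentially the same route as the cited proof.
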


\begin{remark}
The continuous dependence of $H_n^t$ on $t$ in Proposition \ref{prop:containment} is used to assure that $\int_0^s H_n^tf(X_n(t))\ud t$ exists. This is necessary to construct a local exponential martingale used in the proof.
\end{remark}

%\begin{remark}
%For Proposition \ref{prop:containment} to hold, it is not necessary that the map $t \mapsto H_t$ is continuous. Indeed, the proof relies on the fact that the local martingale
%$$
%e^{f(X_n(t)) - f(X_n(0)) - \int_0^t H_n^sf(X_n(s)))\ud s}
%$$
%is right-continuous for $f \in C_c^\infty(M)$. Since taking the integral is an absolutely continuous operation, composition with right-continuous paths then preserves right-continuity.
%\end{remark}

\subsection{Freidlin-Wentzell  theory for time-inhomogeneous diffusions} \label{section:FW_theory}

For the proof of Theorem \ref{theorem:Schilder_time}, we embed the frame bundle $FM$ into some Euclidean space $\RR^N$. Using this embedding, we push forward the stochastic differential equation in \eqref{eq:horizontal_epsilon_proof} to a stochastic differential equation in $\RR^N$. To obtain the large deviations for such diffusions, we use Freidlin-Wentzell theory (\cite{FW12}). Since the stochastic differential equations has time-inhomogeneous coefficients, we have to adjust the Freidlin-Wentzell theory to this case. We follow the line of proof for Freidlin-Wentzell theory for time-homogeneous diffusions, i.e., by using Euler approximations and making the drift and variance constant on small intervals of time, see e.g. \cite[Theorem 5.6.7]{DZ98}. 

\begin{theorem}\label{theorem:time_inhomogeneous_FW}
Let $W_t$ be a standard $\RR^d$-valued Brownian motion and consider for every $\epsilon > 0$ the process $X_t^\epsilon$ satisfying
$$
\dd X_t^\epsilon = b(t,X_t^\epsilon)\ud t + \sqrt{\epsilon}\sigma(t,X_t^\epsilon)\ud W_t,
$$
with $X_0^\epsilon = x_0$.

Assume that $b$ and $\sigma$ are bounded and Lipschitz on $\RR^+\times\RR^d$, i.e.,
$$
|b(t,x) - b(s,y)| + |\sigma(t,x) - \sigma(s,y)| \leq L(|t - s| + |x - y|),
$$
where $L > 0$. Then the sequence $\{X_t^\epsilon\}_{\epsilon > 0}$ satisfies in $C([0,1];\RR^d)$ the large deviation principle with good rate function
$$
I(\gamma) = \inf\left\{\frac12\int_0^1|\dot \varphi(t)|^2\ud t \middle| \gamma(t) = x + \int_0^t b(s,\gamma(s))\ud s + \int_0^t \sigma(s,\gamma(s))\dot\varphi(s)\ud s\right\}.
$$
\end{theorem}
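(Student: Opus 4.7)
The plan is to follow the Euler-approximation strategy for Freidlin--Wentzell theory from \cite[Theorem 5.6.7]{DZ98}, modified to accommodate the time-inhomogeneity of the coefficients. The starting point is Schilder's theorem: $\sqrt{\epsilon}W$ satisfies the LDP in $C_0([0,1];\RR^d)$ with good rate function $\frac12\int_0^1|\dot\varphi(t)|^2\ud t$. I transfer this LDP to $X^\epsilon$ by approximating the Itô solution map by a continuous one (so the contraction principle applies), and then removing the approximation via an exponential-equivalence argument. The joint $(t,x)$-Lipschitz hypothesis is exactly the regularity needed to make both steps uniform in $\epsilon$.

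For each partition $0 = t_0 < t_1 < \cdots < t_N = 1$ of mesh $\Delta$, introduce the frozen-coefficient approximation
\begin{equation*}
X^{\epsilon,\Delta}_t = X^{\epsilon,\Delta}_{t_k} + b(t_k, X^{\epsilon,\Delta}_{t_k})(t - t_k) + \sqrt{\epsilon}\,\sigma(t_k, X^{\epsilon,\Delta}_{t_k})(W_t - W_{t_k}), \quad t \in [t_k, t_{k+1}].
\end{equation*}
Viewed as a functional $\Phi_\Delta$ of the path $\eta = \sqrt{\epsilon}W$, this is a finite composition of continuous affine operations on $C_0([0,1];\RR^d)$, hence continuous. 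By the contraction principle (\cite[Theorem 4.2.1]{DZ98}), $X^{\epsilon,\Delta}$ satisfies the LDP in $C([0,1];\RR^d)$ with good rate function
\begin{equation*}
I_\Delta(\gamma) = \inf\left\{\tfrac12\int_0^1|\dot\varphi(t)|^2\ud t : \gamma = \Phi_\Delta(\varphi)\right\},
\end{equation*}
where $\Phi_\Delta(\varphi)$ denotes the piecewise-frozen ODE solution obtained by replacing $W$ with the absolutely continuous path $\varphi$.

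The next step is to invoke the general LDP-via-exponential-approximations theorem (\cite[Theorem 4.2.23]{DZ98}), which requires: (a) that $X^{\epsilon,\Delta}$ is an exponentially good approximation of $X^\epsilon$, i.e.\
\begin{equation*}
\limsup_{\Delta \downarrow 0} \limsup_{\epsilon \downarrow 0} \epsilon\log\PP\Bigl(\sup_{0\leq t\leq 1}|X^\epsilon_t - X^{\epsilon,\Delta}_t| > \delta\Bigr) = -\infty \quad \text{for all } \delta > 0,
\end{equation*}
and (b) that $I_\Delta$ converges to $I$ in the appropriate variational sense. Point (b) is handled by a deterministic Gronwall estimate on the skeleton equations: boundedness and joint Lipschitzness give $\|\Phi_\Delta(\varphi) - \gamma\|_\infty = O(\Delta(1 + \|\dot\varphi\|_{L^2}))$ uniformly on bounded Cameron--Martin balls, which matches the rate functions on closed and open sets via standard arguments.

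The main obstacle is (a): obtaining the uniform Gaussian-type exponential bound on the difference. Here the joint Lipschitz assumption pays off, because on each $[t_k, t_{k+1}]$ one has $|b(s,X^\epsilon_s) - b(t_k,X^{\epsilon,\Delta}_{t_k})| \leq L\bigl(\Delta + |X^\epsilon_s - X^\epsilon_{t_k}| + |X^\epsilon_{t_k} - X^{\epsilon,\Delta}_{t_k}|\bigr)$ and likewise for $\sigma$, so the time-inhomogeneity contributes only a deterministic $L\Delta$ term. Combining this with the exponential martingale inequality $\PP(\sup_{0 \leq s \leq 1}|\int_0^s A_r \ud W_r| \geq r) \leq 2\exp\bigl(-r^2/(2\|A\|_\infty^2)\bigr)$ applied to the stochastic integrals of the coefficient differences, and iterating a discrete Gronwall inequality across the $N = \lceil 1/\Delta\rceil$ subintervals, yields the required $-\infty$ limit. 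The argument is essentially the time-homogeneous one of \cite[Lemma 5.6.18]{DZ98}, with the only new ingredient being the harmless deterministic $L\Delta$ term.
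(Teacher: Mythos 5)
Your proposal is correct and follows exactly the route the paper indicates: it gives no detailed proof of this theorem, only the remark that one adapts the Euler-approximation argument of \cite[Theorem 5.6.7]{DZ98} by freezing the (now time-dependent) coefficients on small time intervals, which is precisely your construction of $X^{\epsilon,\Delta}$, the contraction-principle step, and the exponentially-good-approximation estimate in which the joint Lipschitz hypothesis contributes only the harmless deterministic $L\Delta$ term. Your write-up is in fact more detailed than the paper's own treatment.
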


The same result also holds when we consider Stratonovich stochastic differential equations instead of the Ito one. Following the same reasoning as in the proof of \cite[Theorem 2.5]{KRV18}, we have the following corollary.

\begin{corollary}\label{theorem:time_inhomogeneous_FW_Stratonovich}
Let $W_t$ be Brownian motion and consider for every $\epsilon > 0$ the process $X_t^\epsilon$ satisfying the Stratonovich stochastic differential equation
$$
\dd X_t^\epsilon = b(t,X_t^\epsilon)\ud t + \sqrt{\epsilon}\sigma(t,X_t^\epsilon)\circ\ud W_t,
$$
with $X_0^\epsilon = x_0$.

Assume that $b$ and $\sigma$ are bounded and Lipschitz on $\RR^+\times\RR^d$, i.e.,
$$
|b(t,x) - b(s,y)| + |\sigma(t,x) - \sigma(s,y)| \leq L(|t - s| + |x - y|),
$$
where $L > 0$. Then the sequence $\{X_t^\epsilon\}_{\epsilon > 0}$ satisfies in $C([0,1];\RR^d)$ the large deviation principle with good rate function
$$
I(\gamma) = \inf\left\{\frac12\int_0^1|\dot \varphi(t)|^2\ud t \middle| \gamma(t) = x + \int_0^t b(s,\gamma(s))\ud s + \int_0^t \sigma(s,\gamma(s))\dot\varphi(s)\ud s\right\}.
$$
\end{corollary}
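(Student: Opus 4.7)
The plan is to reduce to the Ito setting already handled by Theorem~\ref{theorem:time_inhomogeneous_FW} and then transfer the large deviation principle by exponential equivalence, along the lines of \cite[Theorem 2.5]{KRV18}. Rewriting the Stratonovich equation in Ito form yields
\begin{equation}
\dd X_t^\epsilon = \left(b(t, X_t^\epsilon) + \tfrac{\epsilon}{2}\, c(t, X_t^\epsilon)\right)\ud t + \sqrt{\epsilon}\,\sigma(t, X_t^\epsilon)\, \dd W_t,
\end{equation}
where $c_i(t,x) = \sum_{j,k}\sigma_{jk}(t,x)\,\partial_{x_j}\sigma_{ik}(t,x)$ is the Stratonovich--Ito correction (for which we implicitly use that $\sigma$ is smooth enough beyond its Lipschitz bound, as is the case in the intended application). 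The decisive feature is the prefactor $\epsilon$: the correction is a small drift at the scale of the large deviations.

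Next, I would introduce $Y_t^\epsilon$, the Ito solution of $\dd Y_t^\epsilon = b(t,Y_t^\epsilon)\ud t + \sqrt{\epsilon}\,\sigma(t,Y_t^\epsilon)\,\dd W_t$ with $Y_0^\epsilon = x_0$, driven by the same Brownian motion $W$. Theorem~\ref{theorem:time_inhomogeneous_FW} immediately yields the LDP for $\{Y_t^\epsilon\}$ in $C([0,1];\RR^d)$ with the rate function $I$ as in the statement; note that $I$ is defined purely through the deterministic variational problem $\gamma = x + \int b\,\ud s + \int \sigma\dot\varphi\,\ud s$ and therefore does not distinguish the Ito and Stratonovich conventions. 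I would then establish that $X^\epsilon$ and $Y^\epsilon$ are exponentially equivalent at speed $\epsilon^{-1}$, i.e.\
\begin{equation}
\limsup_{\epsilon\to 0}\, \epsilon \log \PP\Bigl(\sup_{0\le t\le 1} |X_t^\epsilon - Y_t^\epsilon| > \delta\Bigr) = -\infty
\end{equation}
for every $\delta > 0$; by \cite[Theorem 4.2.13]{DZ98} this transfers the LDP from $Y^\epsilon$ to $X^\epsilon$ with the same rate function, completing the proof.

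The main technical obstacle is the super-exponential estimate above. Setting $\Delta_t = X_t^\epsilon - Y_t^\epsilon$, one obtains an Ito equation whose drift splits into a piece that is Lipschitz in $\Delta_t$ plus a deterministic term of size $\tfrac{\epsilon}{2}\|c\|_\infty$, and whose martingale part has quadratic variation dominated by $\epsilon L^2 \int_0^t |\Delta_s|^2\,\ud s$. Standard $L^2$ Grönwall only yields $\EE[\sup_{t\le 1}|\Delta_t|^2] = O(\epsilon^2)$, which is insufficient on its own; the required tail decay must be extracted by applying a Bernstein-type exponential inequality to the stochastic integral $\sqrt{\epsilon}\int_0^{\cdot}(\sigma(s,X_s^\epsilon)-\sigma(s,Y_s^\epsilon))\,\dd W_s$ after a stopping-time localisation that keeps its quadratic variation bounded, producing a probability bound of the form $\ee^{-c/\epsilon}$. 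Once this is in hand, exponential equivalence and hence the full statement follow.
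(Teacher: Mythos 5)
Your proposal is correct and follows essentially the same route the paper intends: the cited proof of \cite[Theorem 2.5]{KRV18} likewise converts the Stratonovich equation to Itô form, observes that the correction drift carries a factor $\epsilon$, and disposes of it by an exponential-equivalence (superexponential closeness) argument so that the Itô result of Theorem \ref{theorem:time_inhomogeneous_FW} applies with the unchanged rate function. Your remark that the Itô--Stratonovich correction requires differentiability of $\sigma$ beyond the stated Lipschitz hypothesis is a fair observation, and is harmless here since the coefficients in the application are smooth and compactly supported.
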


\subsection{Proof of Theorem \ref{theorem:Schilder_time}}

Before we prove Theorem \ref{theorem:Schilder_time}, we first need some preliminary results. In the following proposition we prove that given a collection of metrics $\{g(t)\}_{t\in[0,1]}$, we can find another metric that dominates all of these metrics.

\begin{proposition}\label{prop:upper_metric}
Let $\{g(t)\}_{t\in[0,1]}$ be a collection of Riemannian metrics on $M$, depending smoothly on $t$. There exists a Riemannian metric $\overline{g}$ such that for all $x \in M$ and all $v \in T_xM$ we have
$$
g_t(v,v) \leq \overline{g}(v,v)
$$
for all $t \in [0,1]$. 
\end{proposition}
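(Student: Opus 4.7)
The plan is to dominate every $g(t)$ by a suitable scalar multiple of a fixed reference metric. Fix any Riemannian metric $g_0$ on $M$ (for instance, the metric $g(0)$ from the family). For each $(t,x) \in [0,1] \times M$, let $A_t(x): T_xM \to T_xM$ be the symmetric (with respect to $g_0(x)$) endomorphism characterized by
\begin{equation}
g(t)(x)(u,v) = g_0(x)(A_t(x) u, v) \qquad \text{for all } u,v \in T_xM.
\end{equation}
Because $g(t)$ depends smoothly on $(t,x)$, so does $A_t(x)$. Let $\lambda(t,x)$ denote the largest eigenvalue of $A_t(x)$, which is positive since $g(t)$ is positive definite. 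Then
\begin{equation}
g(t)(x)(v,v) \leq \lambda(t,x)\, g_0(x)(v,v) \qquad \text{for all } v \in T_xM.
\end{equation}

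Next I would define $\phi: M \to \RR$ by $\phi(x) := \sup_{t \in [0,1]} \lambda(t,x)$. Since $\lambda$ is continuous on $[0,1] \times M$ and $[0,1]$ is compact, $\phi$ is finite and continuous on $M$. The main obstacle now is that $\phi$ need not be smooth, whereas $\overline{g}$ should be a (smooth) Riemannian metric. This is overcome by a standard partition-of-unity argument: since $M$ is a smooth, second countable (hence paracompact) manifold, there exists a smooth function $C: M \to \RR$ with $C(x) \geq \phi(x) + 1$ for all $x \in M$. Explicitly, take a locally finite open cover $\{U_i\}$ by relatively compact sets, let $c_i := \sup_{x \in \overline{U_i}} \phi(x) + 1 < \infty$, take a smooth partition of unity $\{\chi_i\}$ subordinate to $\{U_i\}$, and set $C := \sum_i c_i \chi_i$, which is smooth and bounded below by $\phi + 1 > 0$ pointwise.

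Finally, define the tensor field $\overline{g}(x) := C(x)\, g_0(x)$. This is smooth since $C$ and $g_0$ are smooth, and positive definite since $C(x) > 0$, so $\overline{g}$ is a Riemannian metric on $M$. For any $t \in [0,1]$, any $x \in M$, and any $v \in T_xM$,
\begin{equation}
g(t)(x)(v,v) \leq \lambda(t,x)\, g_0(x)(v,v) \leq \phi(x)\, g_0(x)(v,v) \leq C(x)\, g_0(x)(v,v) = \overline{g}(x)(v,v),
\end{equation}
which is the desired bound. The only nontrivial step is the construction of the smooth majorant $C$ of the merely continuous function $\phi$; everything else is a direct consequence of compactness of $[0,1]$ and the smooth dependence of $g(t)$ on its parameters.
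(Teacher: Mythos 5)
Your proof is correct and follows essentially the same route as the paper: both reduce to a conformal multiple $\overline{g} = \left(\sum_i c_i \chi_i\right) g(0)$ of a fixed reference metric, with local constants $c_i$ obtained from compactness of $[0,1]$ times a relatively compact set and glued by a partition of unity. Your phrasing via the largest eigenvalue of the endomorphism $A_t(x)$ is just a coordinate-free version of the paper's operator-norm bound $\lVert G_t^{1/2}G_0^{-1/2}\rVert_2^2$, and your explicit separation of the continuous majorant $\phi$ from its smooth domination $C$ is a clean way to present the same argument.
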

\begin{proof}
Let $\{U_n\}_{n \in \NN}$ be a countable collection of relatively compact charts covering $M$. Furthermore, let $\{\varphi_n\}_{n \in \NN}$ be a partition of unity for the collection $\{U_n\}_{n \in \NN}$.

Writing $G_t(x)$ for the matrix of coordinates of the metric $g(t)$ in a chart $U_n$, we have
\begin{align*}
g(t)(v,v) 
&= 
\left\langle G_t^{\frac12}(x)v,G_t^{\frac12}(x)v\right\rangle_2
\\
&= 
\left\langle G_t^{\frac12}(x)G_0^{-\frac12}(x)G_0^{\frac12}(x)v,G_t^{\frac12}(x)G_0^{-\frac12}(x)G_0^{\frac12}(x)v\right\rangle_2
\end{align*}
for all $v \in T_xM$. Here, the Euclidean inner product has to be understood as the Euclidean inner product of the vector of coefficients of $v$. Using Cauchy-Schwarz inequality, we find 
\begin{equation}\label{eq:inner_product_estimate}
g(t)(v,v) \leq \left|\left|G_t^{\frac12}(x)G_0^{-\frac12}(x)\right|\right|_2^2\left|\left|G_0^{\frac12}(x)v\right|\right|_2^2 = \left|\left|G_t^{\frac12}(x)G_0^{-\frac12}(x)\right|\right|_2^2g(0)(v,v).
\end{equation}
Note that $G_t(x)$ depends continuously on $t$ and $x$, and hence so does $G_t^{\frac12}(x)$. Similarly, $G_0^{-\frac12}(x)$ depends continuously on $x$.  Since $[0,1]$ is compact and $U_n$ is relatively compact, the continuity implies that $||G_t^{\frac12}(x)G_0^{-\frac12}(x)||_2$ is bounded on $[0,1] \times U_n$. If we write 
$$
C = \sup_{t\in [0,1],x\in U_n} \left|\left|G_t^{\frac12}(x)G_0^{-\frac12}(x)\right|\right|_2 < \infty,
$$
then we can define the Riemannian metric $\overline g_n$ on $U_n$ by 
$$
\overline g_n = Cg(0).
$$
From \eqref{eq:inner_product_estimate} it follows that
$$
g_t(v,v) \leq \overline g_n(v,v)
$$
for all $v \in T_xM$ and all $x \in U_n$.

We now define on $M$ the metric 
$$
\overline g = \sum_{n=1}^\infty \varphi_n\overline g_n,
$$
which has the desired property by construction.
\end{proof}

Let us denote by $\overline d$ the Riemannian distance function associated to the metric $\overline g$ from Proposition \ref{prop:upper_metric}. Fix $x_0 \in M$ and consider the radial function $\overline r(x) = \overline d(x,x_0)$. Since $\overline r$ is not everywhere smooth, it is not suitable for constructing a good containment function as in Definition \ref{definition:good_containment_function}. However, since $\overline r$ is 1-Lipschitz (with respect to the metric $\overline g$), we can find a smooth function $\tilde r$ with $\tilde r(x_0) = \overline r(x_0) = 0$ and such that $||\tilde r - \overline r|| \leq 1$ and $|\dd \tilde r|_{\overline g} \leq 2$. Using this, we define $\Upsilon$ by
\begin{equation}\label{eq:containment_function}
\Upsilon(x) = \log(1 + \tilde r(x)^2).
\end{equation}
We now show that $\Upsilon$ can be used as a good containment function for the operators arising from the generator of $g(t)$-Brownian motion. The following is an adaptation of \cite[Lemma 8.2]{KRV18}.

\begin{proposition}\label{prop:good_containment}
Assume $M$ is complete, and let $\{g(t)\}_{t\in[0,1]}$ be a collection of metric on $M$, smoothly depending on $t$. For every $t\in[0,1]$, define $\Hh_tf = \frac12|\dd f|_{g(t)}^2$ for every $f \in C_c^\infty(M)$. Let $\overline g$ be a metric as in Proposition \ref{prop:upper_metric}, and define $\Upsilon$ as in \eqref{eq:containment_function}. Then $\Upsilon$ is a good containment function for the collection $\Hh_t$.
\end{proposition}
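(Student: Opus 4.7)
I would verify the four defining properties of a good containment function from Definition \ref{definition:good_containment_function} for $\Upsilon(x) = \log(1 + \tilde r(x)^2)$, exploiting that $\tilde r$ is a smooth approximation of the $\overline g$-radial function $\overline r$. Properties (1) and (2) are immediate: since $1 + \tilde r^2 \geq 1$ we have $\Upsilon \geq 0$, with $\Upsilon(x_0) = 0$ because $\tilde r(x_0) = \overline r(x_0) = 0$, and $\Upsilon$ is smooth as a composition of smooth maps.

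For property (3), I would use the bound $\|\tilde r - \overline r\|_\infty \leq 1$ to show that $\{\Upsilon \leq c\}$ is contained in the closed $\overline g$-ball around $x_0$ of radius $R = \sqrt{e^c - 1} + 1$. Proposition \ref{prop:upper_metric} applied at $t = 0$ gives $g(0) \leq \overline g$, hence $d_{g(0)} \leq \overline d$, so this $\overline g$-ball is contained in the $g(0)$-ball of the same radius. The latter is compact by completeness of $M$ together with Hopf--Rinow, and closed subsets of compact sets are compact.

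The main obstacle is property (4). A chain-rule computation yields
\[
\dd \Upsilon = \frac{2 \tilde r}{1 + \tilde r^2}\, \dd \tilde r,
\qquad
\Hh_t(x, \dd \Upsilon(x)) = \frac{1}{2}\, \frac{4 \tilde r(x)^2}{(1 + \tilde r(x)^2)^2}\, |\dd \tilde r(x)|_{g(t)}^2.
\]
The scalar factor $\frac{4 \tilde r^2}{(1+\tilde r^2)^2}$ is bounded above by $1$ uniformly in $\tilde r$, so it suffices to bound $|\dd \tilde r(x)|_{g(t)}^2$ uniformly in $t \in [0,1]$ and $x \in M$. My plan is to combine the built-in control $|\dd \tilde r|_{\overline g} \leq 2$ with the metric comparison of Proposition \ref{prop:upper_metric}, using smoothness in $t$ on the compact interval $[0, 1]$ to pass from the $\overline g$-cometric estimate to a uniform $g(t)$-cometric estimate, with the decay of the prefactor $4 \tilde r^2/(1 + \tilde r^2)^2$ at infinity available to absorb any residual growth in $|\dd \tilde r|_{g(t)}$ away from $x_0$.
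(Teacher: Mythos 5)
Your treatment of properties (1)--(3) is sound and essentially matches the paper's argument; in (3) you are in fact more explicit than the paper about why $\overline d$-boundedness of the sublevel set yields compactness (via $d_{g(0)}\le\overline d$ and Hopf--Rinow).

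The genuine gap is in property (4), and your plan for it does not close. Proposition \ref{prop:upper_metric} gives $g(t)(v,v)\le\overline g(v,v)$ on tangent vectors, and for covectors this inequality \emph{reverses}: if $g(t)\le\overline g$ on $T_xM$, then $|\omega|_{g(t)}\ge|\omega|_{\overline g}$ for every $\omega\in T_x^*M$ (in coordinates, $G_t\le\overline G$ forces $G_t^{-1}\ge\overline G^{-1}$). Hence the built-in control $|\dd\tilde r|_{\overline g}\le 2$ is a bound in the \emph{smaller} of the two cometrics and yields no upper bound whatsoever on $|\dd\tilde r|_{g(t)}$. Neither of your proposed patches repairs this: smoothness in $t$ on $[0,1]$ gives uniformity only locally in $x$, not over the noncompact manifold $M$; and the prefactor $4\tilde r^2/(1+\tilde r^2)^2$ decays only like $\tilde r^{-2}$, while the ratio $|\omega|^2_{g(t)}/|\omega|^2_{\overline g}$ is governed by the constants $C$ in the construction of $\overline g$, which are not controlled and may grow faster than $\overline d(\cdot,x_0)^2$. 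What property (4) actually needs is a metric bounding the family from \emph{below}, i.e.\ some $\underline g\le g(t)$ for all $t$ (obtainable by the same partition-of-unity argument with $C^{-1}g(0)$ in place of $Cg(0)$), together with a Lipschitz bound on $\tilde r$ relative to $\underline g$ --- which in turn reopens the compactness question in (3) unless the metrics $g(t)$ are uniformly equivalent over $M$ (automatic when $M$ is compact). For the record, the paper's own proof glosses over exactly this point by asserting $|\dd\Upsilon|^2_{g(t)}\le|\dd\Upsilon|^2_{\overline g}$ outright, which has the same directional problem; so you have correctly located the crux of the proposition, but the proposal as written does not resolve it.
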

\begin{proof}
Clearly, $\Upsilon \geq 0$ and $\Upsilon(x_0) = 0$. Furthermore, since $\tilde r$ is smooth, it follows that $\Upsilon$ is smooth. Now, for $c > 0$, the contintuity of $\Upsilon$ implies that $\{x\in M|\Upsilon(x) \leq c\}$ is closed. Furthermore, $\Upsilon(x) \leq c$ implies that $\tilde r(x) \leq \sqrt{e^c + 1}$. It follows that $\overline d(x,x_0) \leq \tilde r(x) + 1 \leq 1 + \sqrt{e^c + 1}$. Hence, $\{x \in M| \Upsilon(x) \leq c\}$ is bounded. Since $M$ is complete, we conclude that $\{x\in M|\Upsilon(x) \leq c\}$ is compact.

Finally, observe that
$$
\dd\Upsilon(x) = \frac{2\tilde r(x)}{1 + \tilde r(x)^2}\dd\tilde r(x),
$$ 
so that
$$
|\dd\Upsilon(x)|_{\overline g} \leq 2|\dd\tilde r(x)|_{\overline g} \leq 4.
$$
From this, it follows that
$$
\Hh_t(x,\dd\Upsilon(x)) = \frac12|\dd\Upsilon(x)|_{g(t)}^2 \leq \frac12|\dd\Upsilon(x)|_{\overline g}^2 \leq 8.
$$
for all $t$ and $x$. Hence, we find that $\sup_{t,x} \Hh_t(x,\dd\Upsilon(x)) < \infty$.
\end{proof}

We can now show that $X_t^\epsilon$ remains in compact sets with high enough probability.

\begin{proposition}\label{prop:compact_concentration}
Let $M$ be a complete manifold, and let $\{g(t)\}_{t\in[0,1]}$ be a collection of Riemannian metrics on $M$, smoothly depending on $t$. For every $\epsilon > 0$, let $X_t^\epsilon$ be the process generated by $\frac{\epsilon}{2}\Delta_M^t$. Then for every $\alpha \geq 0$, there exists a compact set $K_\alpha \subset M$ such that
$$
\limsup_{\epsilon \to 0} \epsilon\log\PP\left(X_t^\epsilon) \notin K_\alpha \mbox{ for some } t\in[0,1]\right) \leq - \alpha.
$$
Moreover, the sets $K_\alpha$ can be chosen increasing, with $\bigcup_\alpha K_\alpha = M$.
\end{proposition}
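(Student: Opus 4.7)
The plan is to apply Proposition \ref{prop:containment} with the identification $n = \epsilon^{-1}$, using as Lyapunov function the good containment function $\Upsilon$ already furnished by Proposition \ref{prop:good_containment}. Under this identification, the process $X_t^\epsilon$ corresponds to $X_n$ with time-dependent generator $A_n^t = \frac{1}{2n}\Delta_M^t$, and the assumed existence of $X_t^\epsilon$ on $[0,1]$ together with the continuity of its sample paths place us in the setting of Assumption \ref{assumption:Hamiltonian}.

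The key computation is the exponential conjugation. For $f \in C_c^\infty(M)$, the standard product-rule identity $\Delta_M^t(e^{nf}) = e^{nf}\bigl(n^2|\dd f|_{g(t)}^2 + n\Delta_M^tf\bigr)$ yields
\begin{equation}
H_n^tf(x) \;=\; \frac{1}{n}e^{-nf(x)}A_n^te^{nf}(x) \;=\; \frac12|\dd f(x)|_{g(t)}^2 + \frac{1}{2n}\Delta_M^tf(x).
\end{equation}
Since $f$ has compact support and $g(t)$ depends smoothly on $t\in[0,1]$, the function $\Delta_M^tf$ is uniformly bounded on $M\times[0,1]$, so $\frac{1}{2n}\Delta_M^tf \to 0$ uniformly on $M$. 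Taking the constant sequence $f_n = f$ therefore gives $\LIM_{n\to\infty}H_n^tf_n = \frac12|\dd f|_{g(t)}^2 =: H^tf$, so $H^t \subset ex\text{-}\lim_{n\to\infty}H_n^t$ with $\Dd(H^t) = C_c^\infty(M)$. Setting $\Hh_t(x,p) = \frac12|p|_{g(t)}^2$ on $T^*M$ gives $H^tf(x) = \Hh_t(x,\dd f(x))$, and the smooth dependence of $g(t)$ on $t$ provides the required continuity of $t\mapsto H_n^t$ and measurability of $t\mapsto H^t$.

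Proposition \ref{prop:good_containment} establishes that the function $\Upsilon$ defined in \eqref{eq:containment_function} is a good containment function for precisely this collection $\{\Hh_t\}_{t\in[0,1]}$. With all hypotheses of Proposition \ref{prop:containment} in place, the conclusion provides an increasing family of compacts $K_\alpha$ exhausting $M$ for which
\[
\limsup_{n\to\infty}\tfrac{1}{n}\log\PP\bigl(X_n(t)\notin K_\alpha\text{ for some }t\in[0,1]\bigr) \;\leq\; -\alpha.
\]
Rewriting via $\epsilon = 1/n$ gives the stated estimate.

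I do not expect a serious obstacle: the exponential conjugation computation is classical and Proposition \ref{prop:containment} has been tailored precisely to this kind of application. The only mild care needed is to confirm that $A_n^t$ is indeed the time-inhomogeneous generator of $X^\epsilon$ in a sense strong enough to legitimize the exponential-martingale argument that underlies Proposition \ref{prop:containment}; this is guaranteed by the construction of $g(t)$-Brownian motion as the projection of the Stratonovich SDE \eqref{eq:SDE_time_BM} on the frame bundle, which makes $X^\epsilon$ a continuous strong Markov process solving the martingale problem for $A_n^t$ on $C_c^\infty(M)$.
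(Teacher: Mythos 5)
Your proof is correct and follows essentially the same route as the paper: the same exponential conjugation $H_\epsilon^t f = \frac{\epsilon}{2}\Delta_M^t f + \frac12|\dd f|_{g(t)}^2$, the same limiting Hamiltonian $\Hh_t(x,p)=\frac12|p|_{g(t)}^2$, and the same appeal to Propositions \ref{prop:good_containment} and \ref{prop:containment} with $n=\epsilon^{-1}$. Your added justification that $\frac{1}{2n}\Delta_M^t f\to 0$ uniformly (via compact support and smoothness in $t$) and the closing remark on the martingale-problem formulation are just slightly more explicit versions of steps the paper leaves implicit.
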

\begin{proof}
We apply the results of Section \ref{subsection:compact_containment} with $\epsilon = \frac1n$. Let $f \in C_c^\infty(M)$ and define
$$
H_\epsilon^tf = \epsilon e^{-\epsilon^{-1}f}\frac{\epsilon}{2}\Delta_M^te^{\epsilon^{-1}f}.
$$
Then
$$
H_\epsilon^tf = \epsilon e^{-\epsilon^{-1}f}e^{\epsilon^{-1}f}\frac12(\Delta_M^tf + \epsilon^{-1}|\dd f|^2_{g(t)}) = \frac{\epsilon}{2}\Delta_M^tf + \frac12|\dd f|^2_{g(t)}.
$$
Now define $H^t\subset C_b(M) \times C_b(M)$ with domain $\Dd(H^t) = C_c^\infty(M)$ and $H^tf = \frac12|\dd f|^2_{g(t)}$. Then for all $f \in C_c^\infty(M)$ we have
$$
\lim_{\epsilon\to0} ||H_\epsilon^tf - H^tf|| = 0,
$$
so that $H \subset \LIM_{\epsilon \to 0} H_\epsilon$. Furthermore, note that $H^tf(x) = \Hh^t(x,\dd f(x))$ for $\Hh^t(x,p) = \frac12|p|^2_{g(t)}$. Consequently, Assumption \ref{assumption:Hamiltonian} is fulfilled, and by Proposition \ref{prop:good_containment}, the function $\Upsilon$ given in \eqref{eq:containment_function} is a good containment function for the collection $\Hh_t$. Since $g(t)$ depends continuously on $t$, we find that $t \mapsto H_n^t$ is continuous, so that the claim follows from Proposition \ref{prop:containment}.
\end{proof}

Finally, we also need the following technical lemma.

\begin{lemma}\label{lemma:compact_frames}
Let $M$ be a manifold, and let $\{g(t)\}_{t\in[0,1]}$ be a collection of metrics on $M$, smoothly depending on $t$.  For every $t \in [0,1]$ and $x \in M$, define 
$$
\Oo_{(t,x)} = \{u:\RR^d \to (T_xM,g(t))| u \mbox{ isometry}\}.
$$
Let $K \subset M$ be compact. Then the set 
$$
\bigcup\left\{\Oo_{(t,x)}\middle| t\in[0,1], x \in K\right\}
$$
is a compact subset of $FM$. 
\end{lemma}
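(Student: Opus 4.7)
The plan is to exhibit the set as the continuous image of a compact space. The key observation is that any $g(t)$-orthonormal frame at $x$ can be obtained from a fixed $g(0)$-orthonormal frame at $x$ by a matrix action that depends continuously on $t$ and on the base frame. Concretely, fix $u \in \Oo_{(0,x)}$, and for $t \in [0,1]$ define the symmetric positive-definite matrix $G_t(u) \in \mathrm{GL}(d,\RR)$ with entries $G_t(u)_{ij} = \inp{ue_i}{ue_j}_{g(t)}$. A frame of the form $v = u \cdot M$ with $M \in \mathrm{GL}(d,\RR)$ lies in $\Oo_{(t,x)}$ if and only if $M^T G_t(u) M = I$, equivalently $M = G_t(u)^{-1/2} O$ for some $O \in O(d)$. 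This gives a surjection
\begin{equation*}
\Phi: [0,1] \times \Oo^{(0)}|_K \times O(d) \longrightarrow \bigcup\{\Oo_{(t,x)} : t \in [0,1], x \in K\}, \qquad \Phi(t,u,O) = u \cdot G_t(u)^{-1/2} O,
\end{equation*}
where $\Oo^{(0)}|_K := \{u \in FM : \pi(u) \in K,\, u \in \Oo_{(0,\pi u)}\}$ denotes the $g(0)$-orthonormal frame bundle restricted to $K$.

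First I would verify that the domain of $\Phi$ is compact. The interval $[0,1]$ and the orthogonal group $O(d)$ are compact, and $\Oo^{(0)}|_K$ is compact because it is a principal $O(d)$-bundle over the compact base $K \subset M$ (as a closed subset of $\pi^{-1}(K)$ whose fibres are all isomorphic to the compact group $O(d)$). Next I would check continuity of $\Phi$. Since $g$ depends smoothly on $t$, the matrix $G_t(u)$ depends continuously on the pair $(t,u)$. As $G_t(u)$ is symmetric positive-definite, the map $A \mapsto A^{-1/2}$ (defined on the open cone of positive-definite matrices) is smooth, so $(t,u) \mapsto G_t(u)^{-1/2}$ is continuous. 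Combined with the smoothness of the right $\mathrm{GL}(d,\RR)$-action on the principal bundle $FM$, this yields continuity of $\Phi$.

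Surjectivity of $\Phi$ onto the target set is what justifies the parametrisation: for any $v \in \Oo_{(t,x)}$ with $x \in K$, pick any $u \in \Oo_{(0,x)}$ (such a frame exists since $(T_xM,g(0))$ is a $d$-dimensional inner product space), write $v = u \cdot M$, set $O := G_t(u)^{1/2} M$, and compute $O^T O = M^T G_t(u) M = I$, so $O \in O(d)$ and $\Phi(t,u,O) = v$. Therefore the target set is the continuous image of a compact space and hence compact in $FM$.

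The only mild obstacle is the smooth dependence of the inverse square root, but this is a standard fact about the functional calculus on symmetric positive-definite matrices and poses no real difficulty; essentially everything else reduces to recognising the target set as a continuous image of a product of compact spaces.
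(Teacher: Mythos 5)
Your proof is correct, but it takes a genuinely different route to the same underlying strategy (exhibiting the set as a continuous image of a compact space). The paper works directly with the time-dependent bundle $\Oo$ over $\RR\times M$: it chooses local smooth sections $u_{(t_i,x_i)}$ of $\Oo$ over finitely many relatively compact sets covering $[0,1]\times K$, and writes the union of fibres over each piece as the image of $[0,1]\times \overline U_{(t_i,x_i)}\times O(d)$ under $(t,x,g)\mapsto u_{(t_i,x_i)}(t,x)g$. You instead build a single global parametrisation $\Phi(t,u,O)=u\,G_t(u)^{-1/2}O$ out of the time-zero orthonormal frame bundle, using the Gram matrix of a $g(0)$-frame with respect to $g(t)$; the verification that $uM\in\Oo_{(t,x)}$ iff $M^TG_t(u)M=I$ iff $M=G_t(u)^{-1/2}O$ with $O\in O(d)$ is correct, as is the continuity of $(t,u)\mapsto G_t(u)^{-1/2}$ via smoothness of the inverse square root on positive-definite symmetric matrices. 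What your approach buys is that no local sections of the time-dependent bundle $\Oo$ are needed and the covering argument disappears from view; what it costs is that the covering argument is merely relocated into the claim that $\Oo^{(0)}|_K$ is compact. That claim is true and standard, but your parenthetical justification (``a closed subset of $\pi^{-1}(K)$ with fibres isomorphic to $O(d)$'') is not by itself a proof, since $\pi^{-1}(K)\subset FM$ is not compact and closedness plus compact fibres does not imply compactness without local triviality; one still needs to cover $K$ by finitely many trivialising charts with compact closures, which is precisely the paper's argument applied to the time-independent bundle. With that step either cited as the standard compactness of the orthonormal frame bundle over a compact set or spelled out, your proof is complete.
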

\begin{proof}
Consider the bundle $\Oo$ over $\RR \times M$ with fibres $\Oo_{(t,x)}$. For every $(t,x) \in [0,1] \times K$, let $U_{(t,x)} \subset [0,1] \times M$ be open and relatively compact such that there exists a smooth section $u_{(t,x)}$ of $\Oo$ on $\overline U_{(t,x)}$. Since $[0,1] \times K$ is compact, we can find finitely many $(t_1,x_1),\ldots,(t_k,x_k)$ such that 
$$
[0,1] \times K \subset \bigcup_{i=1}^k U_{(t_i,x_i)} \subset \bigcup_{i=1}^k \overline U_{(t_i,x_i)} .
$$
Consequently, we have
$$
\bigcup\left\{\Oo_{(t,x)}\middle| t\in[0,1], x \in K\right\}
\subset
\bigcup_{i=1}^k \bigcup\left\{\Oo_{(t,x)}\middle| t\in[0,1], x \in \overline U_{(t_i,x_i)}\right\}
$$
Since 
$$
\bigcup\left\{\Oo_{(t,x)}\middle| t\in[0,1], x \in K\right\}
$$
is closed, it suffices to show that 
$$
\bigcup\left\{\Oo_{(t,x)}\middle| t\in[0,1], x \in \overline U_{(t_i,x_i)}\right\}
$$
is compact for all $i = 1,\ldots,k$.

For this, consider the map $\Phi_i:[0,1] \times \overline U_{(t_i,x_i)} \times O(d) \to FM$ given by
$$
\Phi_i(t,x,g) = u_{(t_i,x_i)}(t,x)g.
$$
Then $\Phi_i$ is continuous as composition of continuous maps. Furthermore, we have that
$$
\Phi_i([0,1] \times \overline U_{(t_i,x_i)} \times O(d)) = \bigcup\left\{\Oo_{t,x}\middle| t\in[0,1], x \in \overline U_{(t_i,x_i)}\right\}.
$$
Since $[0,1] \times \overline U_{(t_i,x_i)} \times O(d)$ is compact, the above, together with the continuity of $\Phi_i$ now proves the claim.
\end{proof}

%\begin{proof}
%Consider the bundle $\Oo$ over $\RR \times M$ with fibres $\Oo_{(t,x)}$, and let $u$ be a smooth section of $\Oo$. Now consider the map $\Phi:[0,1] \times K \times O(d) \to FM$ given by
%$$
%\Phi(t,x,g) = u(t,x)g.
%$$
%Then $\Phi$ is continuous as composition of continuous maps. Furthermore, we have that
%$$
%\Phi([0,1] \times K \times O(d)) = \bigcup\left\{\Oo_{t,x}\middle| t\in[0,1], x \in K\right\}.
%$$
%Since $[0,1] \times K \times O(d)$ is compact, the above, together with the continuity of $\Phi$ now proves the claim.
%\end{proof}

%Now consider spacetime $\MM$ with metric given by
%$$
%g(t,x)\dd(t,x)^2 := \dd t^2 + g(t)(x)\dd x^2.
%$$
%By the Nash embedding theorem, there exists an $N$ such that $(\MM,g(t,x))$ can be isometrically embedded into $\RR^N$. Denoting this embedding by $\iota$, it induces a collection $\{\iota_t\}$ of embeddings by defining $\iota_t:(M,g(t)) \to \RR^N$ by $\iota_t(x) = \iota(t,x)$. Since $\iota$ is smooth, these embeddings depend smoothly on $t$.\\

With all the preparations done, we are ready to prove Theorem \ref{theorem:Schilder_time}. 

\begin{proof}[Proof of Theorem \ref{theorem:Schilder_time}]
Consider the process $U_t^\epsilon$ in $FM$ defined by 
\begin{equation}\label{eq:epsilon_SDE_proof}
\dd U_t^\epsilon = H_i(t,U_t^\epsilon)\circ \dd W_t^{\epsilon,i} - \frac12(\partial_1g(t))_{ij}(U_t^\epsilon)V^{ij}(U_t^\epsilon)\ud t
\end{equation}
with $U_0^\epsilon = u_0 \in \Oo_{(0,x_0)}$. Here, $W_t^\epsilon = \sqrt{\epsilon}W_t$, where $W_t$ is an $\RR^d$-valued standard Brownian motion. 

Now, let $\{K_\alpha\}_{\alpha > 0}$ be an increasing sequence of compact sets with $\bigcup_\alpha K_\alpha = M$ as in Proposition \ref{prop:compact_concentration}. By Lemma \ref{lemma:compact_frames} we have that
$$
\tilde K_\alpha := \bigcup\left\{\Oo_{(t,x)}\middle|x\in K_\alpha,t\in[0,1]\right\} \subset FM
$$
is compact.

Let $\varphi_\alpha:FM \to \RR$ be a smooth function with compact support and $\varphi \equiv 1$ on $\tilde K_\alpha$. Since $FM$ is locally compact, such a function exists. Consider the process process $U_t^{\epsilon,\alpha}$ in $FM$ given by
$$
\dd U_t^{\epsilon,\alpha} =
\varphi_\alpha(U_t^{\epsilon,\alpha})H_i(t,U_t^{\epsilon,\alpha})\circ \dd W_t^{\epsilon,i} - \frac12\varphi_\alpha(U_t^{\epsilon,\alpha})(\partial_1g(t))_{ij}(U_t^{\epsilon,\alpha})V^{ij}(U_t^{\epsilon,\alpha})\ud t,
$$
with $U_0^{\epsilon,\alpha} = u_0$. 

By Whitney's embedding theorem, there exists an $N \in \NN$ and a smooth embedding $\iota:FM \to \RR^N$. By Ito's formula, we find using \eqref{eq:epsilon_SDE_proof} that
\begin{align*}
&\dd(\iota(U_t^{\epsilon,\alpha}))
\\ 
&= 
\varphi_\alpha(U_t^{\epsilon,\alpha})H_i(t,\cdot)\iota(U_t^{\epsilon,\alpha})\circ \dd W_t^{\epsilon,i} - \frac12\varphi_\alpha(U_t^{\epsilon,\alpha})\left[(\partial_1g(t))_{ij}V^{ij}\right]\iota(U_t^{\epsilon,\alpha})\ud t
\\
&=
\varphi_\alpha(U_t^{\epsilon,\alpha})\iota^*H_i(t,\iota(U_t^{\epsilon,\alpha}))\circ \dd W_t^{\epsilon,i} - \frac12\varphi_\alpha( U_t^{\epsilon,\alpha})\iota^*\left[(\partial_1g(t))_{ij}V^{ij}\right](\iota(U_t^{\epsilon,\alpha}))\ud t.
\end{align*}

Consequently, we obtained a Stratonovich stochastic differential equation for the $\RR^N$-valued process $\tilde U_t^{\epsilon,\alpha} := \iota(U_t^{\epsilon,\alpha})$. Since $\iota$ and $\iota^{-1}$ are smooth, the vector fields
$$
\varphi_\alpha(\iota^{-1}(\tilde U_t^{\epsilon,\alpha}))\iota^*H_i(t,\tilde U_t^{\epsilon,\alpha})
$$
and
$$
\frac12\varphi_\alpha(\iota^{-1}(\tilde U_t^{\epsilon,\alpha}))\iota^*\left[(\partial_1g(t))_{ij}V^{ij}\right](\tilde U_t^{\epsilon,\alpha})
$$
are smooth and compactly supported inside $\iota(FM)$. By putting them equal to zero outside $\iota(FM)$, we obtain smooth, compactly supported vector fields on $\RR^N$. With slight abuse of notation, we denote these vector fields by the same symbol. Consequently, the equation
\begin{multline}
\dd(\tilde U_t^{\epsilon,\alpha}) = \varphi_\alpha(\iota^{-1}(\tilde U_t^{\epsilon,\alpha}))\iota^*H_i(t,\tilde U_t^{\epsilon,\alpha})\circ \dd W_t^{\epsilon,i} \\
- \frac12\varphi_\alpha(\iota^{-1}(\tilde U_t^{\epsilon,\alpha}))\iota^*\left[(\partial_1g(t))_{ij}V^{ij}\right](\tilde U_t^{\epsilon,\alpha})\ud t
\end{multline}
with $\tilde U_0^{\epsilon,\alpha} = \iota(u_0)$ can be considered as equation on $\RR^N$. Since the drift and diffusion are smooth and compactly supported, they are Lipschitz and bounded. By Corollary \ref{theorem:time_inhomogeneous_FW_Stratonovich} we thus obtain that $\{\tilde U_t^{\epsilon,\alpha}\}_{\epsilon \geq0}$ satisfies in $C([0,1];\RR^N)$ the large deviation principle with good rate function $\tilde I_{\RR^N}^\alpha$ given by
\begin{multline}
\tilde I_{\RR^N}^\alpha(\gamma)\\
= \inf\left\{\int_0^1 |\dot \phi(t)|_{\RR^d}^2\ud t \middle | \gamma(0) = \iota(u_0), \dot\gamma(t) =  \varphi_\alpha(\iota^{-1}(\gamma(t)))\iota^*H_i(t,\gamma(t))\dot\varphi^i(t)\right.\\ \left. - \frac12\varphi_\alpha(\iota^{-1}(\gamma(t)))\iota^*\left[(\partial_1g(t))_{ij}V^{ij}\right](\gamma(t)) \right\}
\end{multline}

Now note that $\iota(FM)$ is closed, and by construction it holds that $\{\tilde U_t^{\epsilon,\alpha}\}_{t\in[0,1]} \in C([0,1];\iota(FM))$ almost surely. Furthermore, suppose that $\gamma(0) = \iota(u_0) \in \iota(FM)$, and there exists a curve $\phi$ such that
$$
\dot\gamma(t) = \varphi_\alpha(\iota^{-1}(\gamma(t)))\iota^*H_i(t,\gamma(t))\dot\phi^i(t) - \frac12\varphi_\alpha(\iota^{-1}(\gamma(t)))\iota^*\left[(\partial_1g(t))_{ij}V^{ij}\right](\gamma(t)).
$$
Then, since the vector fields
$$
(\varphi_\alpha\circ\iota^{-1})\iota^*H_i(t,\gamma(t))
$$
and 
$$
\frac12(\varphi_\alpha\circ\iota^{-1})\iota^*\left[(\partial_1g(t))_{ij}V^{ij}\right]
$$
are tangent to $\iota(FM)$ at points of $\iota(FM)$, we find that $\gamma(t) \in \iota(FM)$ for all $t \in [0,1]$ so that $\gamma \in C([0,1];\iota(FM))$. Consequently, if $\gamma \notin C([0,1];\iota(FM))$, then no such $\phi$ exists, and $\tilde I_{\RR^N}^\alpha(\gamma) = \infty$. It now follows from \cite[Lemma 4.1.5]{DZ98} that $\tilde U_t^{\epsilon,\alpha}$ satisfies in $\iota(FM)$ the large deviation principle with good rate function $\tilde I_{\iota(FM)}^\alpha$ given as the restriction of $\tilde I_{\RR^N}^\alpha$ to $C([0,1];\iota(FM))$.

Since $\iota$ is a homeomorphism, the contraction principle (\cite[Theorem 4.2.1]{DZ98}) implies that $U_t^{\epsilon,\alpha} = \iota^{-1}(\tilde U_t^{\epsilon,\alpha})$ satisfies in $C([0,1];FM)$ the large deviation principle with good rate function $I_{FM}^\alpha$ given by 
\begin{align*}
&I_{FM}^\alpha(\eta)
\\
&=
\tilde I_{\iota(FM)}^\alpha(\iota\circ\eta)
\\
&=
\inf\left\{\int_0^1 |\dot \phi(t)|_{\RR^d}^2\ud t \middle | \iota(\eta(0)) = \iota(u_0), \frac{\dd}{\dd t}(\iota\circ\eta)(t) =  \varphi_\alpha(\eta(t))\iota^*H_i(t,\iota(\eta(t)))\dot\phi^i(t)\right.
\\ 
&\qquad \qquad \qquad \left. - \frac12\varphi_\alpha(\eta(t))\iota^*\left[(\partial_1g(t))_{ij}V^{ij}\right](\iota(\eta(t))) \right\}
\\
&=
\inf\left\{\int_0^1 |\dot \phi(t)|_{\RR^d}^2\ud t \middle | \eta(0) = u_0, \dot\eta(t) =  \varphi_\alpha(\eta(t))H_i(t,\eta(t))\dot\phi^i(t)\right.
\\ 
&\qquad \qquad \qquad \left. - \frac12\varphi_\alpha(\eta(t))\left[(\partial_1g(t))_{ij}V^{ij}\right](\eta(t)) \right\}
\end{align*}

Now, since the projection $\pi:FM \to M$ is smooth, again using the contraction principle, we find that $X_t^{\epsilon,\alpha} = \pi(U_t^{\epsilon,\alpha})$ satisfies in $C([0,1];M)$ the large deviation principle with good rate function $I_M^\alpha$ given by
$$
I_M^\alpha(\zeta) = \inf\{I_{FM}^\alpha(\eta)| \pi(\eta) = \zeta\}.
$$

We show how to obtain the desired expression for $I_M^\alpha$. To this end, suppose that $\zeta$ is such that $\zeta(t) \in K_\alpha$ for all $t \in [0,1]$. Suppose that $\eta:[0,1] \to FM$ is such that $\pi\eta = \zeta$ and $I_{FM}^\alpha(\eta) < \infty$. Then $\eta(0) = u_0$ and there exists a $\phi:[0,1]\to\RR^d$ such that
\begin{equation}\label{eq:ode_hor_lift}
\dot\eta(t) = \varphi_\alpha(\eta(t))H_i(t,\eta(t))\dot\phi^i(t) - \frac12\varphi_\alpha(\eta(t))\left[(\partial_1g(t))_{ij}V^{ij}\right](\eta(t)).
\end{equation}
Since $\eta (0) = u_0 \in \Oo_{(0,x_0)}$, the solution $\tilde\eta$ of the equation
$$
\dot{\tilde\eta}(t) = H_i(t,\tilde\eta(t))\dot\phi^i(t) - \frac12\left[(\partial_1g(t))_{ij}V^{ij}\right](\eta(t)),
$$
with $\tilde\eta(0) = u_0$ satisfies $\tilde\eta(t) \in \Oo_{(t,\zeta(t))}$ for all $t \in[0,1]$. Since $\zeta(t) \in K_\alpha$, we find that $\tilde\eta(t) \in \tilde K_\alpha$ for all $t \in [0,1]$. Hence $\varphi_\alpha(\tilde\eta(t)) = 1$ for all $t \in [0,1]$. But then $\tilde \eta(t)$ is also the solution of \eqref{eq:ode_hor_lift}. We conclude that $\eta$ is the unique horizontal lift with respect to $\{g(t)\}_{t\in[0,1]}$ of $\zeta$ with $\eta(0) = u_0$. In that case, $\phi$ is the anti-development with respect to $\{g(t)\}_{t\in[0,1]}$ of $\zeta$ (see Section \ref{subsection:anti_development}), and we have 
$$
|\dot\phi(t)|_{\RR^d} = |\eta(t)\dot\zeta(t)|_{\RR^d} = |\dot\zeta(t)|_{g(t)}.
$$
Consequently, if $\zeta$ is contained in $K_\alpha$, then the rate function reduces to 
$$
I_M^\alpha(\zeta) = \frac12\int_0^1 |\dot\zeta(t)|_{g(t)}^2\ud t.
$$

To conclude the proof, we show that the sequence $\{X_t^\epsilon\}_{\epsilon > 0}$ satisfies in $C([0,1];M)$ the large deviation principle with rate function as given in \eqref{eq:rate_function_timeBM}. To this end, denote by $T^{\epsilon,\alpha}$ the exit time of $X_t^\epsilon$ from $K_\alpha$. Note that
$$
\PP(T^{\epsilon,\alpha} \leq 1) = \PP(X_t^\epsilon \notin K_\alpha \mbox{ for some } t\in[0,1]).
$$
Hence, by the choice of $K_\alpha$, we have
$$
\limsup_{\epsilon \to 0} \epsilon\log\PP(T^{\epsilon,\alpha} \leq 1) \leq -\alpha
$$
Furthermore, the processes $X_t^{\epsilon}$ and $X_t^{\epsilon,\alpha}$ agree up to time $T^{\epsilon,\alpha}$. 

To prove the upper bound, let $F \subset C([0,1];M)$ be closed. Then
\begin{align*}
\PP(X_t^\epsilon \in F)
&=
\PP(X_t^\epsilon \in F|T^{\epsilon,\alpha} > 1)\PP(T^{\epsilon,\alpha} > 1) + \PP(X_t^\epsilon \in F|T^{\epsilon,\alpha} \leq 1)\PP(T^{\epsilon,\alpha} \leq 1)
\\
&\leq
\PP(X_t^\epsilon \in F \wedge T^{\epsilon,\alpha} > 1) + \PP(T^{\epsilon,\alpha} \leq 1)
\\
&=
\PP(X_t^{\epsilon,\alpha} \in F \cap C([0,1];K_\alpha)) + \PP(T^{\epsilon,\alpha} \leq 1).
\end{align*}
Consequently, we find that
\begin{align*}
&\limsup_{\epsilon\to0} \epsilon \log \PP(X_t^\epsilon \in F)
\\
&\leq 
\max\left\{\limsup_{\epsilon\to0}\epsilon\log\PP(X_t^{\epsilon,\alpha} \in F \cap C([0,1];K_\alpha)),\limsup_{\epsilon\to0}\epsilon\log\PP(T^{\epsilon,\alpha} \leq 1)\right\}
\\
&\leq
\max\left\{-\inf_{\gamma \in F\cap C([0,1];K_\alpha)} I_M^\alpha(\gamma),-\alpha\right\}
\\
&\leq
\max\left\{-\inf_{\gamma \in F} I_M,-\alpha\right\}
\end{align*}
Here, the last line follows from the fact that on $C([0,1];K_\alpha)$, $I_M$ and $I_M^\alpha$ coincide. The upper bound now follows by letting $\alpha$ tend to infinity.

For the lower bound, let $G \subset C([0,1];M)$ be open and fix $\gamma \in G$. Let $\delta > 0$ be such that $B(\gamma,\delta) \subset G$. Since the $K_\alpha$ are increasing with $\bigcup_\alpha K_\alpha = M$, we can find $\alpha > 0$ such that $\gamma$ is contained in the interior of $K_\alpha$. Consequently, by possibly shrinking $\delta$, we have that $B(\gamma,\delta) \subset K_\alpha$. But then we find
$$
\PP(X_t^\epsilon \in G) \geq \PP(X_t^\epsilon \in B(\gamma,\delta)) = \PP(X_t^{\epsilon,\alpha} \in B(\gamma,\delta)).
$$
From this it follows that
\begin{align*}
\limsup_{\epsilon \to 0} \epsilon\log\PP(X_t^\epsilon \in G)
&\geq
\limsup_{\epsilon \to 0} \epsilon\log\PP(X_t^{\epsilon,\alpha} \in B(\gamma,\delta))
\\
&\geq
-I_M^\alpha(\gamma)
\\
&=
-I_M(\gamma).
\end{align*}
Here, the second line follows from the first part of the proof, while in the last line we used that $I_M$ and $I_M^\alpha$ coincide on $C([0,1];K_\alpha)$.\\

Finally, to see that $I_M$ is a good rate function, observe that $I_M = \inf_\alpha I_M^\alpha$, and that $I_M^\alpha$ is a good rate function for every $\alpha > 0$. 
\end{proof}

\smallskip

\textbf{Acknowledgement}
The author thanks Anton Thalmaier for his hospitality during a research visit to Luxembourg, and his help in understanding time-dependent geometry. The author also thanks Frank Redig for his thorough reading of the work, and for suggesting various improvements to the readability and clarity of the presented theory and results. The author acknowledges financial support by the Peter Paul Peterich Foundation via the TU Delft University Fund.

\printbibliography

\end{document}